\newcolumntype{L}[1]{>{\raggedright\let\newline\\\arraybackslash\hspace{0pt}}m{#1}}
\newcolumntype{C}[1]{>{\centering\let\newline\\\arraybackslash\hspace{0pt}}m{#1}}
\newcolumntype{R}[1]{>{\raggedleft\let\newline\\\arraybackslash\hspace{0pt}}m{#1}}
\newtheorem{theorem}{Theorem}
\theoremstyle{definition}
\theoremstyle{lemma}
\newtheorem{lemma}[theorem]{Lemma}
\theoremstyle{remark}
\newtheorem{remark}[theorem]{Remark}
\Crefname{assumption}{Assumption}{Assumptions}
\numberwithin{theorem}{section}
\numberwithin{equation}{section}
\numberwithin{table}{section}
\numberwithin{figure}{section}
\definecolor{myBlue}{RGB}{30,144,255} % dodger blue
\definecolor{myGreen}{RGB}{69,169,0} % chatreuse
\definecolor{myRed}{RGB}{165,12,42} 
\definecolor{myOrange}{RGB}{225,92,22} 
\definecolor{mycolor0}{rgb}{0.12156862745098,0.466666666666667,0.705882352941177} % orange
\definecolor{mycolor1}{rgb}{0.00000,0.44700,0.74100}% blau
\definecolor{mycolor2}{rgb}{0.85000,0.32500,0.09800}% rot/orange
\definecolor{mycolor3}{rgb}{0.49400,0.18400,0.55600}% lila
\definecolor{mycolor4}{rgb}{0.92900,0.69400,0.12500}% gelb
\definecolor{mycolor5}{rgb}{0.46600,0.67400,0.18800}% gruen
\definecolor{mycolor6}{rgb}{0.30100,0.74500,0.93300}% hellblau
\definecolor{mycolor7}{rgb}{0.63500,0.07800,0.18400}% pflaume
\newcommand{\delete}[1]{ }
\def\R{\mathbb{R}}
\newcommand\dx{\,\mathrm{d}x}
\newcommand{\ddt}{\ensuremath{\frac{\mathrm{d}}{\mathrm{d}t}} }
\DeclareMathOperator{\id}{id}
\DeclareMathOperator{\Diag}{Diag}
\newcommand{\calK}{\ensuremath{\mathcal{K}} }
\newcommand{\calV}{\ensuremath{\mathcal{V}}}
\newcommand{\eps}{\ensuremath{\varepsilon}}
\newcommand{\f}{\ensuremath{f}}
\newcommand{\g}{\ensuremath{g}}
\newcommand{\Amat}{\mathbf{A}}
\newcommand{\Bmat}{\mathbf{B}}
\newcommand{\Cmat}{\mathbf{C}}
\newcommand{\Dmat}{\mathbf{D}}
\newcommand{\Emat}{\mathbf{E}}
\newcommand{\FmatSkew}{\mathbf{F}_\text{skew}}
\newcommand{\FmatSym}{\mathbf{F}_\text{sym}}
\newcommand{\Imat}{\mathbf{I}}
\newcommand{\Jmat}{\mathbf{J}}
\newcommand{\Kmat}{\mathbf{K}}
\newcommand{\Mmat}{\mathbf{M}}
\newcommand{\Rmat}{\mathbf{R}}
\newcommand{\Vmat}{\mathbf{V}}
\DeclareFontFamily{U}{matha}{\hyphenchar\font45}
\DeclareFontShape{U}{matha}{m}{n}{
	<-6> matha5 <6-7> matha6 <7-8> matha7
	<8-9> matha8 <9-10> matha9
	<10-12> matha10 <12-> matha12
}{}
\DeclareSymbolFont{matha}{U}{matha}{m}{n}
\DeclareFontFamily{U}{mathx}{\hyphenchar\font45}
\DeclareFontShape{U}{mathx}{m}{n}{
	<-6> mathx5 <6-7> mathx6 <7-8> mathx7
	<8-9> mathx8 <9-10> mathx9
	<10-12> mathx10 <12-> mathx12
}{}
\DeclareSymbolFont{mathx}{U}{mathx}{m}{n}
\DeclareMathDelimiter{\vvvert} {0}{matha}{"7E}{mathx}{"17}%
\definecolor{corrRed}{RGB}{18,124,175} 
\begin{document}
\title{A Novel Energy-Based Modeling Framework$^*$}
\author[]{R.~Altmann$^\dagger$, P.~Schulze$^\ddagger$}
\address{${}^{\dagger}$ Institute of Analysis and Numerics, Otto von Guericke University Magdeburg, Universit\"atsplatz 2, 39106 Magdeburg, Germany}
\address{${}^{\ddagger}$ Institute of Mathematics, Technische Universität Berlin, Str.~des 17. Juni~136, 10623 Berlin, Germany}
\email{robert.altmann@ovgu.de, pschulze@math.tu-berlin.de}
\thanks{$^*$ This article will be published in {\em Mathematics of Control, Signals, and Systems}.}
\date{\today}
\keywords{}
%
%
%=============================================================================
%=========  Abstract
%=============================================================================
\begin{abstract}
We introduce an energy-based model, which seems especially suited for constrained systems. The proposed model generalizes classical port-Hamiltonian input--state--output systems and exhibits similar properties such as energy dissipation as well as structure-preserving interconnection and Petrov--Galerkin projection. In terms of time discretization, the midpoint rule and discrete gradient methods are dissipation-preserving. Besides the verification of these properties, we present ten examples from different fields of application illustrating the great flexibility of the proposed framework. 
\end{abstract}
%
%
%=============================================================================
%=========  Title / Contents
%=============================================================================
\maketitle
%\setcounter{tocdepth}{3}
%\tableofcontents
%
{\tiny {\bf Key words.} energy-based modeling, dissipation, structure preservation}\\
\indent
{\tiny {\bf AMS subject classifications.}  {\bf 37J06}, {\bf 65P10}, {\bf 65M60}} 
%
%37J06 = General theory of finite-dimensional Hamiltonian and Lagrangian systems, Hamiltonian and Lagrangian structures, symmetries, invariants 
%65P10 = Numerical methods for Hamiltonian systems including symplectic integrators
%65M60 = Finite elements, Rayleigh-Ritz and Galerkin methods, finite methods
%
%37K06 = General theory of infinite-dimensional Hamiltonian and Lagrangian systems, Hamiltonian and Lagrangian structures, symmetries, conservation laws
%93D30 = Lyapunov and storage functions
% 
%
%=============================================================================
%=========  Introduction
%=============================================================================
\section{Introduction}

Mathematical modeling of time-dependent systems usually leads to ordinary differential equations (ODEs) or partial differential equations (PDEs).
Moreover, in many applications the governing equations also include algebraic equations which arise from constraints such as coupling conditions or constitutive relations. 
In the particular case of PDEs, constraints may also appear naturally due to structural properties (such as being divergence-free), the inclusion of boundary conditions, or limiting situations (such as the vibrating string with a mass density tending to zero)~\cite{MehZ24}. 
In all these cases, the mathematical model is given by a system of (partial) differential--algebraic equations (DAEs). 
Especially when considering physical systems, the mathematical model typically also has some relevant qualitative properties, as for instance stability or conserved quantities.
Since the violation of these properties may lead to unphysical behavior, it is desirable to preserve these properties when discretizing or approximating the mathematical model.
One possible approach is to preserve the properties by maintaining a certain structure of the governing equations. 
Well-known examples include GENERIC~\cite{GrmO97}, gradient~\cite{HirS74}, Hamiltonian~\cite{Arn89}, or port-Hamiltonian (pH) \cite{SchJ14} structures.

PH systems extend classical Hamiltonian systems by taking internal energy dissipation and energy exchange with the environment or other subsystems into account.
As a consequence, these systems ensure that the corresponding Hamiltonian, which often represents an energy, satisfies a dissipation inequality with a specific supply rate.
Therefore, pH systems are dissipative and, under additional assumptions on the Hamiltonian, also stable; see e.g.~\cite{SchJ14}.
Furthermore, power-preserving interconnections of pH systems lead again to a pH system, which makes the pH formalism especially useful for control \cite{MasS92,VuLM16,SchPFWM21} and for network modeling \cite{Sch04,HauMMMMRS20,AltMU21c}.
Besides, the pH formalism is well-suited for multiphysics applications where energy serves as the \emph{lingua franca} between different physical domains; see e.g.~\cite{VosS14,CarMP17,KriS21}.
Moreover, mathematical models for which pH formulations are already available cover a wide range of application areas including chemistry \cite{HoaCJL11,RamMS13,WanMS18}, electromagnetism \cite{PayMH20,GerHRS21}, mechanics \cite{MacM04,AltS17,BruAPM19a,RasCSS21}, and thermodynamics \cite{EbeM04,LohKL21}. 
While originally introduced for finite-dimensional ODE systems, the pH formalism has been extended to infinite-dimensional systems \cite{JacZ12,RasCSS20,MehZ24} and to port-Hamiltonian differential--algebraic equations (pH-DAEs) \cite{SchM20,MehS23,MehU23}.
In addition, numerical schemes have been derived which preserve pH structures under discretization~\cite{Kot19,BruRS22,Mor24,GieKT24} or model order reduction~\cite{PolS12,GugPBS12,ChaBG16}.

% new formulation
Within this paper, we consider dynamical systems which can be written in the form 
\begin{align}
\label{eq:modelForIntro}
	\begin{bmatrix}
		\partial_{z_1} H \\ \dot{z}_2 \\ 0
	\end{bmatrix}
	=
	(\Jmat-\Rmat)
	\begin{bmatrix}
		\dot{z}_1\\
		\partial_{z_2} H\\
		z_3
	\end{bmatrix}
\end{align}
for some given energy function~$H$ and constant matrices~$\Jmat$, $\Rmat$, possibly with additional inputs or inhomogeneities; see the precise definition in \Cref{sec:model}. 
Such a structure includes classical pH systems (with vanishing $z_1, z_3$) as well as gradient systems (with vanishing $z_2, z_3$). 
% advantages w.r.t. pH
The proposed framework seems especially suited for constrained systems with poroelasticity as a prominent example. 
For poroelasticity, pH formulations require the inclusion of additional equations~\cite{AltMU21c}, which is not necessary in the proposed setting.   
Moreover, the state $z_3$ provides more freedom in modeling and, hence, enlarges the applicability of the framework as we will prove by a broad list of examples. 

% proven properties 
Within \Cref{sec:model}, we prove that systems of the form~\eqref{eq:modelForIntro} are (just as pH systems) automatically energy dissipative. 
We further show that the interconnection of two such systems preserves the structure if the connection is done in a power-preserving manner and discuss how Petrov--Galerkin projections need to be designed in order to preserve the proposed structure. 
In terms of time stepping schemes, we consider the midpoint scheme as well as discrete gradient methods. 
Both approaches are shown to be dissipation-preserving, i.e., also the discrete energy dissipates for vanishing inputs. 
Finally, \Cref{sec:examples} provides ten examples which fit in the given framework. This includes finite-dimensional examples as well as PDE models with a corresponding structure. 

\subsubsection*{Notation} 
Throughout this paper, we use the Matlab notation for vectors, i.e., we write $[x; y] \coloneqq [x^T, y^T]^T$ for two column vectors~$x,y$.
Besides, we use $\Diag(\Amat_1,\Amat_2,\ldots,\Amat_k)$ to denote (block) diagonal matrices with diagonal (block) entries $\Amat_1,\ldots,\Amat_k$.
To indicate that a matrix $\Amat$ is positive (semi-)definite, we write $\Amat > 0$ ($\Amat \ge 0$).
Moreover, we introduce the empty vector~$\bullet$ in order to distinguish it from possible zero entries. 
With this, we have, e.g., $[x; \bullet] = x$. 
For the Euclidean inner product, we write $\langle x, y\rangle = x^T y$.
%
%
%=============================================================================
%=========  Basic Model 
%=============================================================================
\section{Energy-Based Modeling Framework}\label{sec:model}
This section is devoted to the introduction of an energy-based framework, which is especially suited for constrained systems. 
Consider a state $z = [z_1; z_2; z_3] \in\R^{n_1+n_2+n_3}$, $n\coloneqq n_1+n_2+n_3$, and an energy function $H = H(z_1, z_2)$. 
For a skew-symmetric matrix $\Jmat = -\Jmat^T \in\R^{n, n}$ and a symmetric positive semi-definite matrix~$\Rmat = \Rmat^T \in\R^{n, n}$, we consider the model equations 
\begin{subequations}
\label{eq:model:inputoutput}
\begin{align}
	\begin{bmatrix}
		\partial_{z_1} H \\ 
		\dot{z}_2 \\ 
		0
	\end{bmatrix}
	&=
	(\Jmat-\Rmat)
	\begin{bmatrix}
		\dot{z}_1\\
		\partial_{z_2} H\\
		z_3
	\end{bmatrix} 
	+ 
	\begin{bmatrix} 
		\Bmat_1 \\ 
		\Bmat_2 \\ 
		\Bmat_3 
	\end{bmatrix} u,  \\
	y 
	&= 
	\begin{bmatrix} 
		\Bmat_1^T & \Bmat_2^T & \Bmat_3^T
	\end{bmatrix} 
	\begin{bmatrix} 
		\dot{z}_1 \\ 
		\partial_{z_2} H \\ 
		z_3
	\end{bmatrix}.
\end{align}
\end{subequations}
Therein, $u, y\in \R^{m}$ denote the in- and outputs and $\Bmat_i\in\R^{n_i, m}$ are arbitrary matrices. 
We would like to emphasize that the state $z_3$ is not part of the energy. The introduction of such a state, however, enlarges the class of possible applications as we will illustrate in \Cref{sec:examples}.
\begin{remark}\label{rem:statedependentJR}
For simplicity, we consider constant matrices $\Jmat$, $\Rmat$, and $\Bmat_i$ in \eqref{eq:model:inputoutput}.
However, we emphasize that the results presented in the following extend to the case where these matrices depend on time or the state, as long as $\Jmat$ and $\Rmat$ are pointwise skew-symmetric and symmetric positive semi-definite, respectively.
\end{remark}
\begin{remark}
In the special case $z_3=0$, $\Rmat = 0$, and $u = 0$, system~\eqref{eq:model:inputoutput} reduces to a hybrid representation of a Dirac structure as presented in~\cite[Sect.~5.3]{SchJ14}.
Such representations are obtained by exchanging parts of the effort and flow variables and, thereby, differ from the classical input--state--output representation of pH systems where the flows are expressed in terms of the efforts, cf.~\cite[Sect.~4.2]{SchJ14}. 
\end{remark}
%
%
%=============================================================================
\subsection{Energy dissipation and interconnections}
Within this section, we summarize properties of system~\eqref{eq:model:inputoutput}. 
Similar as for pH systems, we prove energy dissipation with supply rate $\langle y, u\rangle$ and discuss power-preserving interconnections.  
\begin{lemma}[energy dissipation]
\label{lem:dissipative}
The energy satisfies $\ddt H \le \langle y, u\rangle$. In particular, system~\eqref{eq:model:inputoutput} is energy dissipative for $u = 0$. 
\end{lemma}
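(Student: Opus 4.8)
The plan is to show that the rate of change of the energy equals exactly the natural duality pairing between the left-hand and right-hand column vectors in system~\eqref{eq:model:inputoutput}, and then to read off the sign of each contributing term. Write $e \coloneqq [\dot z_1; \partial_{z_2} H; z_3]$ for the effort vector on the right and $f \coloneqq [\partial_{z_1} H; \dot z_2; 0]$ for the flow vector on the left, so that the dynamics read $f = (\Jmat - \Rmat)\, e + \Bmat u$ with $\Bmat \coloneqq [\Bmat_1; \Bmat_2; \Bmat_3]$ and $y = \Bmat^T e$.

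First I would differentiate the energy. Since $H$ depends only on $z_1$ and $z_2$, the chain rule gives $\ddt H = \langle \partial_{z_1} H, \dot z_1 \rangle + \langle \partial_{z_2} H, \dot z_2 \rangle$. The key observation is that this is precisely $\langle f, e \rangle$: the first block contributes $\langle \partial_{z_1} H, \dot z_1 \rangle$, the second contributes $\langle \dot z_2, \partial_{z_2} H \rangle$, and the third block contributes $\langle 0, z_3 \rangle = 0$. Hence $\ddt H = \langle f, e \rangle$, and the non-energetic state $z_3$ drops out by construction.

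Next I would substitute the model equation into this pairing, obtaining $\langle f, e \rangle = \langle (\Jmat - \Rmat)\, e, e \rangle + \langle \Bmat u, e \rangle$, and treat the two terms separately. Skew-symmetry of $\Jmat$ yields $\langle \Jmat e, e \rangle = 0$, while positive semi-definiteness of $\Rmat$ gives $\langle \Rmat e, e \rangle \ge 0$, so the dissipative block satisfies $\langle (\Jmat - \Rmat)\, e, e \rangle = -\langle \Rmat e, e \rangle \le 0$. For the input term, $\langle \Bmat u, e \rangle = u^T \Bmat^T e = \langle u, y \rangle$ by the output equation. Combining these, $\ddt H = -\langle \Rmat e, e \rangle + \langle y, u \rangle \le \langle y, u \rangle$, which is the claim; setting $u = 0$ gives $\ddt H \le 0$.

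I expect no serious obstacle here, since the argument mirrors the standard power-balance computation for pH systems. The only point requiring care is the bookkeeping of the block structure, namely verifying that the mismatched third block (flow $0$ paired with effort $z_3$) contributes nothing, so that the auxiliary variable $z_3$ neither enters nor corrupts the dissipation identity. This is exactly what allows the framework to accommodate a state outside the energy while retaining the pH-type dissipation inequality.
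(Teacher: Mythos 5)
Your proposal is correct and follows essentially the same route as the paper's proof: the chain rule identifies $\ddt H$ with the pairing of the flow and effort vectors (with the third block contributing $\langle 0, z_3\rangle = 0$), after which skew-symmetry of $\Jmat$, positive semi-definiteness of $\Rmat$, and the output equation $y = \Bmat^T e$ yield the dissipation inequality. The explicit power balance $\ddt H = -\langle \Rmat e, e\rangle + \langle y, u\rangle$ you record along the way is likewise noted in the paper, so there is nothing to add.
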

\begin{proof}
Due to the assumptions on $\Jmat$ and $\Rmat$, a direct calculation shows
\begin{align*}
	\ddt H
	&= \langle \partial_{z_1} H, \dot{z}_1\rangle + \langle \partial_{z_2} H, \dot{z}_2\rangle \\
%	&= \Big\langle \begin{bmatrix} \dot{z}_1 \\ \partial_{z_2} H \end{bmatrix}, 
%	\begin{bmatrix} \partial_{z_1} H\\ \dot{z}_2 \end{bmatrix} \Big\rangle \\
	&= \Bigg\langle \begin{bmatrix} \dot{z}_1 \\ \partial_{z_2} H \\ z_3 \end{bmatrix}, \begin{bmatrix} \partial_{z_1} H \\ \dot{z}_2 \\ 0 \end{bmatrix} \Bigg\rangle \\
	&= - \Bigg\langle \begin{bmatrix} \dot{z}_1 \\ \partial_{z_2} H \\ z_3 \end{bmatrix}, \Rmat \begin{bmatrix} \dot{z}_1 \\ \partial_{z_2} H \\ z_3 \end{bmatrix} \Bigg\rangle 
	+ 
	\Bigg\langle \begin{bmatrix} \dot{z}_1 \\ \partial_{z_2} H \\ z_3 \end{bmatrix}, \begin{bmatrix} \Bmat_1 \\ \Bmat_2 \\ \Bmat_3 \end{bmatrix} u \Bigg\rangle
	\le \langle y, u\rangle.  	
\end{align*}
Hence, we have $\ddt H \le 0$ in the case $u=0$. 
\end{proof}
We note that the proof of Lemma~\ref{lem:dissipative} does not only show the dissipation inequality but also the corresponding power balance equation.
Similarly as for pH systems (see, e.g., \cite{MehM19}), the structure of~\eqref{eq:model:inputoutput} is preserved under power-conserving as well as dissipative interconnections as the following lemma shows. 
\begin{lemma}[structure-preserving interconnection]
\label{lem:interconnection}	
Consider two systems of the form~\eqref{eq:model:inputoutput}, namely  
\begin{align*}
	\begin{bmatrix} 
		\partial_{1} H^{[i]} \\ 
		\dot{z}^{[i]}_2 \\ 
		0 
	\end{bmatrix}
	&= 
	\big( \Jmat^{[i]} - \Rmat^{[i]} \big) 
	\begin{bmatrix} 
		\dot{z}^{[i]}_1 \\ 
		\partial_{2} H^{[i]} \\ 
		z^{[i]}_3 
	\end{bmatrix}
	+ 
	\begin{bmatrix} 
		\Bmat_1^{[i]} \\ 
		\Bmat_2^{[i]} \\ 
		\Bmat_3^{[i]}
	\end{bmatrix} u^{[i]}, \\
	y^{[i]} 
	&= 
	\begin{bmatrix} 
		\Bmat_1^{[i]\, T} & \Bmat_2^{[i]\, T} & \Bmat_3^{[i]\, T} 
	\end{bmatrix} 
	\begin{bmatrix} 
		\dot{z}^{[i]}_1 \\ 
		\partial_{2} H^{[i]} \\
		z^{[i]}_3 
	\end{bmatrix}
\end{align*}
with respective states $z^{[1]}$, $z^{[2]}$, energy functions $H^{[1]}$, $H^{[2]}$, and the short notation $\partial_{k} H^{[i]} = \partial_{z^{[i]}_k} H^{[i]}$. Then an interconnection of the form  
\[
	\begin{bmatrix} 
		u^{[1]} \\ 
		u^{[2]} 
	\end{bmatrix}
	= 
	\big( \FmatSkew - \FmatSym \big)
	\begin{bmatrix} 
		y^{[1]} \\ 
		y^{[2]} 
	\end{bmatrix}
	+ 
	\begin{bmatrix} 
		\tilde u^{[1]} \\ 
		\tilde u^{[2]} 
	\end{bmatrix}
\]
with $\FmatSkew=-\FmatSkew^T$ and positive semi-definite $\FmatSym=\FmatSym^T$ yields again a system of the form~\eqref{eq:model:inputoutput}. 	
\end{lemma}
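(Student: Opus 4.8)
The plan is to exhibit the interconnected system explicitly in the form~\eqref{eq:model:inputoutput} by stacking the two subsystems and absorbing the coupling matrix into the structure matrices. First I would collect the states block by block, setting $z_1 = [z_1^{[1]}; z_1^{[2]}]$, $z_2 = [z_2^{[1]}; z_2^{[2]}]$, $z_3 = [z_3^{[1]}; z_3^{[2]}]$, and define the combined energy $H(z_1,z_2) = H^{[1]}(z_1^{[1]},z_2^{[1]}) + H^{[2]}(z_1^{[2]},z_2^{[2]})$, so that $\partial_{z_k} H = [\partial_k H^{[1]}; \partial_k H^{[2]}]$. Introducing the per-subsystem flow and effort vectors
\[
 f^{[i]} = \begin{bmatrix} \partial_1 H^{[i]} \\ \dot z_2^{[i]} \\ 0 \end{bmatrix}, \qquad e^{[i]} = \begin{bmatrix} \dot z_1^{[i]} \\ \partial_2 H^{[i]} \\ z_3^{[i]} \end{bmatrix}, \qquad \Bmat^{[i]} = \begin{bmatrix} \Bmat_1^{[i]} \\ \Bmat_2^{[i]} \\ \Bmat_3^{[i]} \end{bmatrix},
\]
the two subsystems read $f^{[i]} = (\Jmat^{[i]} - \Rmat^{[i]}) e^{[i]} + \Bmat^{[i]} u^{[i]}$ and $y^{[i]} = \Bmat^{[i]\,T} e^{[i]}$. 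In stacked form with $f = [f^{[1]}; f^{[2]}]$ and $e = [e^{[1]}; e^{[2]}]$ this becomes $f = (\hat{\Jmat} - \hat{\Rmat}) e + \hat{\Bmat}\, u$ and $y = \hat{\Bmat}^T e$, using the block-diagonal matrices $\hat{\Jmat} = \Diag(\Jmat^{[1]}, \Jmat^{[2]})$, $\hat{\Rmat} = \Diag(\Rmat^{[1]}, \Rmat^{[2]})$, $\hat{\Bmat} = \Diag(\Bmat^{[1]}, \Bmat^{[2]})$ together with $u = [u^{[1]}; u^{[2]}]$ and $y = [y^{[1]}; y^{[2]}]$.

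Next I would substitute the interconnection law $u = (\FmatSkew - \FmatSym) y + \tilde u$ with $\tilde u = [\tilde u^{[1]}; \tilde u^{[2]}]$ and $y = \hat{\Bmat}^T e$ into the stacked dynamics, which yields
\[
 f = \big(\hat{\Jmat} - \hat{\Rmat} + \hat{\Bmat}(\FmatSkew - \FmatSym)\hat{\Bmat}^T\big) e + \hat{\Bmat}\, \tilde u .
\]
I would then rewrite this as $f = (\hat{\Jmat}_\star - \hat{\Rmat}_\star) e + \hat{\Bmat}\, \tilde u$ with $\hat{\Jmat}_\star = \hat{\Jmat} + \hat{\Bmat}\,\FmatSkew\,\hat{\Bmat}^T$ and $\hat{\Rmat}_\star = \hat{\Rmat} + \hat{\Bmat}\,\FmatSym\,\hat{\Bmat}^T$, keeping the new output $\tilde y = \hat{\Bmat}^T e$. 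The key point is that the skew part $\FmatSkew$ contributes a skew-symmetric congruence term and the symmetric part $\FmatSym$ a symmetric one: $\hat{\Jmat}_\star = -\hat{\Jmat}_\star^T$ follows from $\hat{\Jmat}^T = -\hat{\Jmat}$ and $\FmatSkew^T = -\FmatSkew$, while $\hat{\Rmat}_\star = \hat{\Rmat}_\star^T$ and $\hat{\Rmat}_\star \ge 0$ follow since $\hat{\Rmat} \ge 0$ and the congruence $\hat{\Bmat}\,\FmatSym\,\hat{\Bmat}^T \ge 0$ inherits positive semi-definiteness from $\FmatSym \ge 0$, the sum of two positive semi-definite matrices being positive semi-definite.

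Finally I would relabel the stacked ordering $[\dot z_1^{[1]}; \partial_2 H^{[1]}; z_3^{[1]}; \dot z_1^{[2]}; \partial_2 H^{[2]}; z_3^{[2]}]$ into the grouped ordering $[\dot z_1; \partial_{z_2} H; z_3]$ required by~\eqref{eq:model:inputoutput}. This is achieved by a permutation matrix $P$ with $e = P\,[\dot z_1; \partial_{z_2} H; z_3]$ and, simultaneously, $f = P\,[\partial_{z_1} H; \dot z_2; 0]$, the latter because the zero (constraint) blocks of the two subsystems are mapped exactly onto the zero block of the combined constraint. Conjugating by $P$ then produces the final matrices $\Jmat = P^T \hat{\Jmat}_\star P$, $\Rmat = P^T \hat{\Rmat}_\star P$, and $\Bmat = P^T \hat{\Bmat} = [\Bmat_1; \Bmat_2; \Bmat_3]$; since $P$ is orthogonal, congruence preserves skew-symmetry and positive semi-definiteness, so $\Jmat$ and $\Rmat$ retain the required properties, and the output becomes $\tilde y = \Bmat^T [\dot z_1; \partial_{z_2} H; z_3]$, matching the output relation of~\eqref{eq:model:inputoutput}. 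I expect the only genuine care to be needed in this bookkeeping step, namely tracking the permutation so that the three block groups line up correctly and checking that the constraint (zero) block and the energy-gradient blocks are not mixed. Once the system is in this form, the dissipation inequality $\ddt H \le \langle \tilde y, \tilde u\rangle$ is inherited directly from \Cref{lem:dissipative}.
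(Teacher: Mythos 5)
Your proof is correct and takes essentially the same route as the paper's: stack the two subsystems with the summed energy, substitute the interconnection law $u = (\FmatSkew-\FmatSym)y + \tilde u$, and observe that $\Bmat\FmatSkew\Bmat^T$ is skew-symmetric while $\Bmat\FmatSym\Bmat^T$ is symmetric positive semi-definite, so both terms can be absorbed into the structure matrices. The only difference is bookkeeping: you keep the block-diagonal (stacked) ordering and conjugate by an orthogonal permutation $P$ at the end --- correctly noting that the same $P$ aligns the flow and effort blocks, including the zero constraint blocks --- whereas the paper writes the interleaved grouped-ordering matrices explicitly from the start and invokes the permutation congruence only to verify that the combined $\Rmat$ is positive semi-definite.
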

\begin{proof}
We define 
\[
	z_1 
	\coloneqq 
	\begin{bmatrix} 
		z_1^{[1]} \\ 
		z_1^{[2]}
	\end{bmatrix}, \quad 
	z_2 
	\coloneqq 
	\begin{bmatrix} 
		z_2^{[1]} \\ 
		z_2^{[2]} 
	\end{bmatrix}, \quad
	z_3
	\coloneqq 
	\begin{bmatrix} 
		z_3^{[1]} \\ 
		z_3^{[2]} 
	\end{bmatrix}, \quad  
	u 
	\coloneqq 
	\begin{bmatrix} 
		u^{[1]} \\ 
		u^{[2]} 
	\end{bmatrix}, \quad
	\tilde u 
	\coloneqq 
	\begin{bmatrix} 
		\tilde u^{[1]} \\ 
		\tilde u^{[2]} 
	\end{bmatrix}, \quad
	y 
	\coloneqq 
	\begin{bmatrix} 
		y^{[1]} \\ 
		y^{[2]} 
	\end{bmatrix}
\]
and the total energy $H(z) \coloneqq H^{[1]}(z^{[1]}) + H^{[2]}(z^{[2]})$. Assuming the block structure 
\[
	\Jmat^{[i]}
	= 
	\begin{bmatrix} 
		\Jmat^{[i]}_{11} & \Jmat^{[i]}_{12} & \Jmat^{[i]}_{13} \\ 
		-\Jmat^{[i]\, T}_{12} & \Jmat^{[i]}_{22} & \Jmat^{[i]}_{23} \\
		-\Jmat^{[i]\, T}_{13} & -\Jmat^{[i]\, T}_{23} & \Jmat^{[i]}_{33} 
	\end{bmatrix}, \qquad
	\Rmat^{[i]}
	= 
	\begin{bmatrix} 
		\Rmat^{[i]}_{11} & \Rmat^{[i]}_{12} & \Rmat^{[i]}_{13} \\ 
		\Rmat^{[i]\, T}_{12} & \Rmat^{[i]}_{22} & \Rmat^{[i]}_{23} \\
		\Rmat^{[i]\, T}_{13} & \Rmat^{[i]\, T}_{23} & \Rmat^{[i]}_{33} 
	\end{bmatrix},
\]
we define the matrices 
\[
	\Jmat
	\coloneqq 
	\begin{bmatrix} 
		\Jmat^{[1]}_{11} & 0 & \Jmat^{[1]}_{12} & 0 & \Jmat^{[1]}_{13} & 0\\
		0 & \Jmat^{[2]}_{11} & 0 & \Jmat^{[2]}_{12} & 0 & \Jmat^{[2]}_{13} \\  
		-\Jmat^{[1]\, T}_{12} & 0 & \Jmat^{[1]}_{22} & 0 & \Jmat^{[1]}_{23} & 0\\
		0 & -\Jmat^{[2]\, T}_{12} & 0 & \Jmat^{[2]}_{22} & 0 & \Jmat^{[2]}_{23} \\
		-\Jmat^{[1]\, T}_{13} & 0 & -\Jmat^{[1]\, T}_{23} & 0 & \Jmat^{[1]}_{33} & 0\\
		0 & -\Jmat^{[2]\, T}_{13} & 0 & -\Jmat^{[2]\, T}_{23} & 0 & \Jmat^{[2]}_{33} 
	\end{bmatrix}, \qquad 
	\Bmat
	\coloneqq
	\begin{bmatrix} 
		\Bmat^{[1]}_{1} & 0 \\
		0 & \Bmat^{[2]}_{1} \\  
		\Bmat^{[1]}_{2} & 0 \\
		0 & \Bmat^{[2]}_{2} \\
		\Bmat^{[1]}_{3} & 0 \\
		0 & \Bmat^{[2]}_{3} 
	\end{bmatrix},
\]
and $\Rmat$ correspondingly.
%
%\[
%	\Rmat
%	\coloneqq 
%	\begin{bmatrix} 
%		\Rmat^{[1]}_{11} & 0 & \Rmat^{[1]}_{12} & 0 & \Rmat^{[1]}_{13} & 0\\
%		0 & \Rmat^{[2]}_{11} & 0 & \Rmat^{[2]}_{12} & 0 & \Rmat^{[2]}_{13} \\  
%		\Rmat^{[1]\, T}_{12} & 0 & \Rmat^{[1]}_{22} & 0 & \Rmat^{[1]}_{23} & 0\\
%		0 & \Rmat^{[2]\, T}_{12} & 0 & \Rmat^{[2]}_{22} & 0 & \Rmat^{[2]}_{23} \\
%		\Rmat^{[1]\, T}_{13} & 0 & \Rmat^{[1]\, T}_{23} & 0 & \Rmat^{[1]}_{33} & 0\\
%		0 & \Rmat^{[2]\, T}_{13} & 0 & \Rmat^{[2]\, T}_{23} & 0 & \Rmat^{[2]}_{33} 
%	\end{bmatrix}
%\]
Since $\Jmat^{[i]}$ is skew-symmetric, so are $\Jmat^{[i]}_{11}$, $\Jmat^{[i]}_{22}$ and $\Jmat^{[i]}_{33}$ for $i=1,2$. 
Hence, $\Jmat$ is skew-symmetric. 
Similarly, it follows that $\Rmat$ is symmetric. 
Moreover, by permuting the block rows and columns of $\Rmat$ as $\lbrace 1,2,3,4,5,6\rbrace \to\lbrace 1,3,5,2,4,6\rbrace$, one observes that $\Rmat$ is congruent to $\Diag(\Rmat^{[1]},\Rmat^{[2]})$ and, therefore, positive semi-definite.
%
%\[
%	\Rmat([1,3,5,2,4,6], [1,3,5,2,4,6])
%	=
%	\begin{bmatrix} 
%		\Rmat^{[1]}_{11} & \Rmat^{[1]}_{12} & \Rmat^{[1]}_{13} & 0 & 0 & 0\\
%		\Rmat^{[1]\, T}_{12} & \Rmat^{[1]}_{22} & \Rmat^{[1]}_{23} & 0 & 0 & 0\\
%		\Rmat^{[1]\, T}_{13} & \Rmat^{[1]\, T}_{23} & \Rmat^{[1]}_{33} & 0 & 0 & 0\\
%		0 & 0 & 0 & \Rmat^{[2]}_{11} & \Rmat^{[2]}_{12} & \Rmat^{[2]}_{13} \\  
%		0 & 0 & 0 & \Rmat^{[2]\, T}_{12} & \Rmat^{[2]}_{22} & \Rmat^{[2]}_{23} \\		
%		0 & 0 & 0 & \Rmat^{[2]\, T}_{13} & \Rmat^{[2]\, T}_{23} & \Rmat^{[2]}_{33} 
%	\end{bmatrix}
%\]
%
The two output equations can be combined to
\begin{align*}
	y
	= 
	\begin{bmatrix} y^{[1]} \\ y^{[2]} \end{bmatrix}
	&= 
	\begin{bmatrix} 
		\Bmat_1^{[1]\, T} & 0 & \Bmat_2^{[1]\, T} & 0 & \Bmat_3^{[1]\, T} & 0 \\
		0 & \Bmat_1^{[2]\, T} & 0 & \Bmat_2^{[2]\, T} & 0 & \Bmat_3^{[2]\, T}
	\end{bmatrix} 
	\begin{bmatrix}
		\dot{z}^{[1]}_1 \\ 
		\dot{z}^{[2]}_1 \\ 
		\partial_{2} H^{[1]} \\ 
		\partial_{2} H^{[2]} \\
		z^{[1]}_3 \\
		z^{[2]}_3
	\end{bmatrix} 
	\eqqcolon 
	\Bmat^T
	\begin{bmatrix} 
		\dot{z}_1 \\ 
		\partial_{z_2} H \\
		z_3 
	\end{bmatrix}.
\end{align*}
The interconnection equation reads $u = (\FmatSkew - \FmatSym)\, y + \tilde u$. With this, we get  
\begin{align*}
	\begin{bmatrix} 
		\partial_{z_1} H \\ 
		\dot{z}_2 \\
		0
	\end{bmatrix}
%	=
%	\begin{bmatrix} 
%		\partial_{1} H^{[1]} \\ 
%		\partial_{1} H^{[2]} \\ 
%		\dot{z}^{[1]}_2 \\ 
%		\dot{z}^{[2]}_2 \\
%		0 \\
%		0
%	\end{bmatrix}
%	&=
%	\big( \Jmat - \Rmat \big) 
%	\begin{bmatrix} 
%		\dot{z}^{[1]}_1 \\ 
%		\dot{z}^{[2]}_1 \\ 
%		\partial_{2} H^{[1]} \\ 
%		\partial_{2} H^{[2]} \\
%		z^{[1]}_3 \\
%		z^{[2]}_3
%	\end{bmatrix}
%	+ \Bmat 
%	\begin{bmatrix} 
%		u^{[1]} \\ 
%		u^{[2]} 
%	\end{bmatrix} \\ 
	%
	&=
	\big( \Jmat - \Rmat \big) 
	\begin{bmatrix} 
		\dot{z}_1 \\ 
		\partial_{z_2} H \\
		z_3	 
	\end{bmatrix}
	+ \Bmat\, (\FmatSkew - \FmatSym)\, y 
	+ \Bmat \tilde u \\ 
	&= 
	\Big( \big(\Jmat+\Bmat\FmatSkew\Bmat^T\big) - \big(\Rmat+\Bmat\FmatSym\Bmat^T\big) \Big) 
	\begin{bmatrix} 
		\dot{z}_1 \\ 
		\partial_{z_2} H \\
		z_3
	\end{bmatrix}
	+ \Bmat \tilde u,	
\end{align*}
which is again of the form~\eqref{eq:model:inputoutput}, since $\Bmat\FmatSkew\Bmat^T$ is skew-symmetric and $\Bmat\FmatSym\Bmat^T$ symmetric positive semi-definite. 
\end{proof}
%
%
%=============================================================================
\subsection{Petrov--Galerkin projection}
Next, we discuss specific projections of the dynamics as considered, e.g., in model order reduction. For this, we restrict ourselves to a quadratic Hamiltonian. More precisely, we assume that 
\begin{align}
\label{eq:quadHamiltonian}
	H 
	= H(z_1,z_2) 
%	= \tfrac12\, z_1^T \Mmat_1 z_1 + \tfrac12\, z_2^T \Mmat_2 z_2 
	= \tfrac12\, \langle z_1, \Mmat_1 z_1\rangle + \tfrac12\, \langle z_2, \Mmat_2 z_2\rangle 
\end{align}
with symmetric matrices $\Mmat_1\in\R^{n_1, n_1}$ and $\Mmat_2\in\R^{n_2, n_2}$.
\begin{lemma}[structure-preserving Petrov--Galerkin projection]
\label{lem:Galerkin} 
Consider a quadratic Hamiltonian as in~\eqref{eq:quadHamiltonian}. 
Further assume full-rank matrices 
\[
	\Vmat_1\in \R^{n_1, r_1}, \qquad
	\Vmat_2\in \R^{n_2, r_2}, \qquad
	\Vmat_3\in \R^{n_3, r_3}	
\]
with $r_j\le n_j$, $j=1,2,3$, spanning approximation spaces of lower dimension such that $\Vmat_2^T \Mmat_2 \Vmat_2$ is invertible. 
Then the Petrov--Galerkin approximation, which is obtained by considering the orthogonal complement to $\Vmat_1$, $\Mmat_2\Vmat_2$, and~$\Vmat_3$, respectively, is again a solution of a system of the form~\eqref{eq:model:inputoutput}. 
\end{lemma}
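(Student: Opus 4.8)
The plan is to realize the Petrov--Galerkin approximation as a single \emph{congruence} transformation applied to the flow/effort pair, so that skew-symmetry of $\Jmat$ and positive semi-definiteness of $\Rmat$ are inherited automatically. The only delicate point is the hybrid role of the second block: the time derivative $\dot z_2$ sits on the effort side of \eqref{eq:model:inputoutput}, whereas the effort $\partial_{z_2}H = \Mmat_2 z_2$ sits on the flow side. This mismatch is exactly where the hypothesis that $\hat\Mmat_2 := \Vmat_2^T\Mmat_2\Vmat_2$ be invertible enters.

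First I would introduce the reduced state through the trial ansatz $z_j = \Vmat_j\hat z_j$ for $j=1,2,3$ and the reduced Hamiltonian $\hat H(\hat z_1,\hat z_2) := H(\Vmat_1\hat z_1,\Vmat_2\hat z_2) = \tfrac12\langle\hat z_1,\hat\Mmat_1\hat z_1\rangle + \tfrac12\langle\hat z_2,\hat\Mmat_2\hat z_2\rangle$, where $\hat\Mmat_j := \Vmat_j^T\Mmat_j\Vmat_j$, so that $\partial_{\hat z_1}\hat H = \hat\Mmat_1\hat z_1$ and $\partial_{\hat z_2}\hat H = \hat\Mmat_2\hat z_2$. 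The key object is the single block-diagonal projection matrix
\[
	\Vmat := \Diag\big(\Vmat_1,\ \Mmat_2\Vmat_2\hat\Mmat_2^{-1},\ \Vmat_3\big),
\]
whose three column spaces are precisely $\range\Vmat_1$, $\range(\Mmat_2\Vmat_2)$ and $\range\Vmat_3$ (the factor $\hat\Mmat_2^{-1}$ being invertible does not change the middle range); this is the meaning of projecting onto the orthogonal complement of $\Vmat_1$, $\Mmat_2\Vmat_2$, and $\Vmat_3$. Substituting the ansatz into the flow vector and using $\hat\Mmat_2^{-1}$ to rewrite the middle block, I would verify the identity
\[
	\begin{bmatrix}\dot z_1\\ \Mmat_2 z_2\\ z_3\end{bmatrix}
	= \Vmat\begin{bmatrix}\dot{\hat z}_1\\ \partial_{\hat z_2}\hat H\\ \hat z_3\end{bmatrix},
\]
i.e.\ the original flow is the $\Vmat$-image of the reduced flow $\hat f := [\dot{\hat z}_1;\ \partial_{\hat z_2}\hat H;\ \hat z_3]$.

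Next I would impose the Galerkin condition $\Vmat^T r = 0$ on the residual $r$ of the first equation of \eqref{eq:model:inputoutput}. The central computation is the projection of the effort side: since $\hat\Mmat_2^{-1}$ cancels the factor $\Vmat_2^T\Mmat_2\Vmat_2 = \hat\Mmat_2$ produced by the middle block, one obtains
\[
	\Vmat^T\begin{bmatrix}\Mmat_1 z_1\\ \dot z_2\\ 0\end{bmatrix}
	= \begin{bmatrix}\hat\Mmat_1\hat z_1\\ \dot{\hat z}_2\\ 0\end{bmatrix}
	= \begin{bmatrix}\partial_{\hat z_1}\hat H\\ \dot{\hat z}_2\\ 0\end{bmatrix},
\]
which is exactly the effort of a reduced system in the form \eqref{eq:model:inputoutput}. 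Setting $\hat\Jmat := \Vmat^T\Jmat\Vmat$, $\hat\Rmat := \Vmat^T\Rmat\Vmat$, and $\hat\Bmat := \Vmat^T\Bmat$, the projected first equation becomes $[\partial_{\hat z_1}\hat H;\ \dot{\hat z}_2;\ 0] = (\hat\Jmat - \hat\Rmat)\hat f + \hat\Bmat u$, while the output equation reduces, without any further condition, to $y = \Bmat^T\Vmat\hat f = \hat\Bmat^T\hat f$.

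Finally I would check that the reduced data again satisfies the hypotheses of \eqref{eq:model:inputoutput}: as $\hat\Jmat$ and $\hat\Rmat$ are congruences $\Vmat^T(\cdot)\Vmat$, skew-symmetry of $\Jmat$ gives $\hat\Jmat^T = -\hat\Jmat$, while symmetry together with $\langle x,\hat\Rmat x\rangle = \langle\Vmat x,\Rmat\Vmat x\rangle\ge 0$ gives $\hat\Rmat = \hat\Rmat^T\ge 0$; moreover $\hat H$ is quadratic with symmetric $\hat\Mmat_1,\hat\Mmat_2$. I expect the main obstacle to be purely in the bookkeeping of the second block: one must insert the normalizing factor $\hat\Mmat_2^{-1}$ (legitimate exactly by the invertibility assumption) in the right place so that the reduced effort reads $\dot{\hat z}_2$ rather than $\hat\Mmat_2\dot{\hat z}_2$, while the reduced flow simultaneously reads $\partial_{\hat z_2}\hat H = \hat\Mmat_2\hat z_2$. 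Blocks one and three are ordinary congruence reductions and require no such care.
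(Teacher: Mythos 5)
Your proof is correct and follows essentially the same route as the paper: the same trial bases $\Vmat_1,\Vmat_2,\Vmat_3$ (with the middle flow expressed through $\Mmat_2\Vmat_2$), the same test spaces spanned by $\Vmat_1$, $\Mmat_2\Vmat_2$, $\Vmat_3$, and the same congruence argument giving skew-symmetry of the reduced $\Jmat$ and positive semi-definiteness of the reduced $\Rmat$. The only (equivalent) difference is where the invertible factor $\Vmat_2^T\Mmat_2\Vmat_2$ is absorbed: the paper tests with $\Diag(\Vmat_1,\Mmat_2\Vmat_2,\Vmat_3)^T$ and then changes the reduced state to $\widetilde w_2=(\Vmat_2^T\Mmat_2\Vmat_2)\,w_2$, whereas you normalize the middle test block by $(\Vmat_2^T\Mmat_2\Vmat_2)^{-1}$, which leaves the reduced state unchanged and keeps the reduced Hamiltonian as the plain restriction $H(\Vmat_1\hat z_1,\Vmat_2\hat z_2)$.
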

\begin{proof}
For the solution $[z_1;z_2;z_3]\in\R^{n_1+n_2+n_3}$, we consider (Galerkin) approximations defined through $[w_1;w_2;w_3]\in\R^{r_1+r_2+r_3}$ via 
\[
	z_1 
	\approx \Vmat_1 w_1, \qquad
	z_2 
	\approx \Vmat_2 w_2, \qquad
	z_3 
	\approx \Vmat_3 w_3.
\]
The corresponding energy reads $\widetilde H = \frac12\, \langle\Vmat_1 w_1, \Mmat_1 \Vmat_1 w_1\rangle + \frac12\, \langle \Vmat_2 w_2, \Mmat_2 \Vmat_2 w_2\rangle$ and the approximate solutions solve the reduced dynamics which are obtained by restricting the test spaces. For this, we require the residual to be orthogonal to $\Vmat_1$, $\Mmat_2\Vmat_2$, and~$\Vmat_3$, respectively. Hence, $[w_1;w_2;w_3]$ solves the reduced system 
\begin{align*}
	&\begin{bmatrix} 
		\Vmat_1^T & 0 & 0 \\ 
		0 & \Vmat_2^T\Mmat_2 & 0 \\ 
		0 & 0 & \Vmat_3^T 
	\end{bmatrix}
	\begin{bmatrix} 
		\Mmat_1 \Vmat_1w_1 \\ 
		\Vmat_2\dot{w}_2 \\ 
		0 
	\end{bmatrix}\\
	&= 
	\begin{bmatrix} 
		\Vmat_1^T & 0 & 0 \\ 
		0 & \Vmat_2^T\Mmat_2 & 0 \\ 
		0 & 0 & \Vmat_3^T 
	\end{bmatrix}
	(\Jmat - \Rmat) 
	\begin{bmatrix} 
		\Vmat_1 & 0 & 0 \\ 
		0 & \Mmat_2 \Vmat_2 & 0 \\ 
		0 & 0 & \Vmat_3 
	\end{bmatrix}
	\begin{bmatrix} 
		\dot{w}_1 \\ 
		w_2 \\ 
		w_3 
	\end{bmatrix} 
	+
	\begin{bmatrix} 
		\Vmat_1^T & 0 & 0 \\ 
		0 & \Vmat_2^T\Mmat_2 & 0 \\ 
		0 & 0 & \Vmat_3^T 
	\end{bmatrix}
	\begin{bmatrix} 
		\Bmat_1 \\ 
		\Bmat_2 \\ 
		\Bmat_3 
	\end{bmatrix} u.
\end{align*}
To recover the form~\eqref{eq:model:inputoutput}, we introduce~$\widetilde w = [w_1; \Vmat_2^T\Mmat_2\Vmat_2 w_2; w_3]$ as the new state. 
This yields $\partial_{\widetilde w_1} \widetilde H = \Vmat_1^T \Mmat_1 \Vmat_1 \widetilde w_1$ and $\partial_{\widetilde w_2} \widetilde H = (\Vmat_2^T \Mmat_2 \Vmat_2)^{-1} \widetilde w_2 = w_2$.
With this, the reduced system has the form 
\[
	\begin{bmatrix} 
		\partial_{\widetilde w_1} \widetilde H \\ 
		\dot{\widetilde w}_2 \\ 
		0 
	\end{bmatrix}
	= 
	( \widetilde \Jmat - \widetilde \Rmat ) 
	\begin{bmatrix} 
		\dot{\widetilde w}_1 \\ 
		\partial_{\widetilde w_2} \widetilde H \\ 
		\widetilde w_3 
	\end{bmatrix}
	+
	\begin{bmatrix} 
		\widetilde\Bmat_1 \\ 
		\widetilde\Bmat_2 \\ 
		\widetilde\Bmat_3 
	\end{bmatrix} u
\]
with~$\widetilde \Jmat = \Diag(\Vmat_1,\Mmat_2 \Vmat_2,\Vmat_3)^T \Jmat\, \Diag(\Vmat_1,\Mmat_2 \Vmat_2,\Vmat_3)$ being skew-symmetric and, similarly, $\widetilde \Rmat = \Diag(\Vmat_1,\Mmat_2 \Vmat_2,\Vmat_3)^T\, \Rmat\, \Diag(\Vmat_1,\Mmat_2 \Vmat_2,\Vmat_3)$ being symmetric positive semi-definite.
Moreover, we have $[\widetilde\Bmat_1; \widetilde\Bmat_2; \widetilde\Bmat_3] = \Diag(\Vmat_1,\Mmat_2 \Vmat_2,\Vmat_3)^T [\Bmat_1; \Bmat_2; \Bmat_3]$ and the new output equation 
\[
	\widetilde y 
	= 
	\begin{bmatrix} 
		\widetilde\Bmat_1^T & \widetilde\Bmat_2^T & \widetilde\Bmat_3^T
	\end{bmatrix} 
	\begin{bmatrix} 
		\dot{\widetilde w}_1 \\ 
		\partial_{\widetilde w_2} \widetilde H \\ 
		\widetilde w_3 
	\end{bmatrix}.
	\qedhere
\]
\end{proof}
\begin{remark}
Although the projection scheme in \Cref{lem:Galerkin} ensures structure preservation, the algebraic constraints are, in general, not preserved. 
Constraint-preserving model reduction techniques typically rely on a splitting of the differential and the algebraic part of the system and subsequent reduction of the differential part only. 
For a general overview on model reduction for DAEs, we refer to~\cite{BenS17}.
\end{remark}
%
%
%=============================================================================
\subsection{Discretization in time}
% midpoint 
In this section, we finally discuss two particular time discretization schemes which are dissipation-preserving. 
First, we apply the midpoint rule to~\eqref{eq:model:inputoutput} and show that the discrete energy dissipates as well if the Hamiltonian is quadratic. 
For this, we assume an equidistant partition of the time interval~$[0,T]$ with constant step size~$\tau$, leading to discrete time points $t^n\coloneqq n\tau$. Accordingly, we denote by $x^n$ the approximation of some variable $x$ at time $t^n$. 
Moreover, we make use of the typical notation $x^{n+1/2} = \frac12 (x^n + x^{n+1})$ and $u^{n+1/2}=u(t^{n+1/2})$ for the input. 
The midpoint rule applied to~\eqref{eq:model:inputoutput} then yields 
\begin{subequations}
	\label{eq:midpoint}
	\begin{align}
		\begin{bmatrix} 
			\tau \partial_{z_1}H^{n+1/2} \\ 
			z^{n+1}_2 - z^n_2 \\ 
			0 \end{bmatrix}
		&= 
		\big( \Jmat - \Rmat \big) 
		\begin{bmatrix} 
			z^{n+1}_1 - z^n_1 \\ 
			\tau \partial_{z_2}H^{n+1/2} \\ 
			\tau z_3^{n+1/2} 
		\end{bmatrix}
		+ 
		\tau \begin{bmatrix} 
			\Bmat_1 \\ 
			\Bmat_2 \\ 
			\Bmat_3 
		\end{bmatrix} u^{n+1/2}, \\
		\tau y^{n+1/2}
		&= 
		\begin{bmatrix} 
			\Bmat_1^T & \Bmat_2^T & \Bmat_3^T 
		\end{bmatrix} 
		\begin{bmatrix} 
			z^{n+1}_1 - z^n_1 \\ 
			\tau \partial_{z_2}H^{n+1/2} \\ 
			\tau z_3^{n+1/2} 
		\end{bmatrix}.
	\end{align}
\end{subequations}
The following result states the property of being dissipation-preserving for quadratic Hamiltonians. 
\begin{lemma}[discrete energy dissipation, midpoint rule] 
\label{lem:midpoint}	
Let the Hamiltonian be quadratic as in~\eqref{eq:quadHamiltonian} and set $H^n \coloneqq H(z^n)$. 
Then the midpoint scheme~\eqref{eq:midpoint} satisfies $H^{n+1}-H^n\le \tau\, \langle y^{n+1/2}, u^{n+1/2} \rangle$ and, in particular, $H^{n+1} \le H^n$ for vanishing inputs. 
\end{lemma}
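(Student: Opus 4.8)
The plan is to mimic the proof of \Cref{lem:dissipative} at the discrete level, the one new ingredient being an \emph{exact} discrete chain rule that holds precisely because $H$ is quadratic. For a Hamiltonian as in~\eqref{eq:quadHamiltonian}, the symmetry of $\Mmat_1$ and $\Mmat_2$ yields the identity
\[
	H^{n+1} - H^n = \langle \partial_{z_1}H^{n+1/2},\, z_1^{n+1} - z_1^n\rangle + \langle \partial_{z_2}H^{n+1/2},\, z_2^{n+1} - z_2^n\rangle,
\]
where, again by the quadratic structure, $\partial_{z_k}H^{n+1/2} = \Mmat_k z_k^{n+1/2}$ coincides with the midpoint evaluation of the gradient. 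This is the very reason for restricting to quadratic $H$: the telescoping cross terms cancel by symmetry, so the energy balance carries no discretization error.

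First I would introduce the discrete flow vector $f \coloneqq [z_1^{n+1} - z_1^n;\ \tau\partial_{z_2}H^{n+1/2};\ \tau z_3^{n+1/2}]$ together with the left-hand side $e \coloneqq [\tau\partial_{z_1}H^{n+1/2};\ z_2^{n+1} - z_2^n;\ 0]$ of the scheme, so that~\eqref{eq:midpoint} reads $e = (\Jmat - \Rmat)f + \tau\Bmat\, u^{n+1/2}$ and the output equation reads $\Bmat^T f = \tau y^{n+1/2}$. Computing $\langle f, e\rangle$ directly and invoking the discrete chain rule above then gives $\langle f, e\rangle = \tau\,(H^{n+1} - H^n)$, exactly paralleling the first two lines of the proof of \Cref{lem:dissipative}.

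Next I would substitute the scheme into this inner product to obtain $\langle f, e\rangle = \langle f, \Jmat f\rangle - \langle f, \Rmat f\rangle + \tau\,\langle \Bmat^T f, u^{n+1/2}\rangle$. Skew-symmetry of $\Jmat$ annihilates the first term, positive semi-definiteness of $\Rmat$ makes $\langle f, \Rmat f\rangle \ge 0$, and the output equation turns the last term into $\tau^2\,\langle y^{n+1/2}, u^{n+1/2}\rangle$. Dividing by $\tau > 0$ yields
\[
	H^{n+1} - H^n = -\tfrac{1}{\tau}\,\langle f, \Rmat f\rangle + \tau\,\langle y^{n+1/2}, u^{n+1/2}\rangle \le \tau\,\langle y^{n+1/2}, u^{n+1/2}\rangle,
\]
and setting $u = 0$ gives $H^{n+1} \le H^n$.

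The only genuinely nontrivial step is establishing the exact discrete chain rule; everything afterwards is a verbatim discrete transcription of \Cref{lem:dissipative}. I expect the main obstacle to be purely bookkeeping: matching $\partial_{z_k}H^{n+1/2}$ with $\Mmat_k z_k^{n+1/2}$, and tracking the powers of $\tau$ carefully so that the supply rate emerges as $\tau\,\langle y^{n+1/2}, u^{n+1/2}\rangle$ rather than with a spurious factor.
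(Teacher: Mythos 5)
Your proposal is correct and follows essentially the same route as the paper's proof: the exact discrete chain rule $H^{n+1}-H^n=\langle z^{n+1}-z^n,\partial_z H^{n+1/2}\rangle$ (a consequence of the quadratic structure, which the paper derives from $2H(z)=\langle z,\partial_z H\rangle$), followed by substituting the midpoint scheme and using skew-symmetry of $\Jmat$, semi-definiteness of $\Rmat$, and the output equation to bound $\tau\,(H^{n+1}-H^n)$ by $\langle \tau y^{n+1/2},\tau u^{n+1/2}\rangle$. Your explicit flow/effort vectors $f$ and $e$ and the final division by $\tau$ are only a cosmetic repackaging of the paper's computation.
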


\begin{proof}
We first note that 
\[
	2 H(z) 
	= \langle z_1, \Mmat_1 z_1 \rangle + \langle z_2, \Mmat_2 z_2 \rangle
	= \langle z_1, \partial_{z_1} H\rangle + \langle z_2, \partial_{z_2} H\rangle
	= \langle z, \partial_{z} H\rangle.
\]
With this, we conclude that 
\[
	H^{n+1} - H^n
	= \big\langle z^{n+1} - z^n, \partial_{z}H^{n+1/2} \big\rangle
	= \Bigg\langle \begin{bmatrix} z^{n+1}_1 - z^n_1 \\ \partial_{z_2}H^{n+1/2} \\ z_3^{n+1/2} \end{bmatrix}, \begin{bmatrix} \partial_{z_1}H^{n+1/2} \\ z^{n+1}_2 - z^n_2 \\ 0 \end{bmatrix} \Bigg\rangle. 
\]
Together with~\eqref{eq:midpoint}, this yields 
\begin{align*}
	\tau\, \big( H^{n+1} - H^n \big)
	= \Bigg\langle \begin{bmatrix} z^{n+1}_1 - z^n_1 \\ \tau\partial_{z_2}H^{n+1/2} \\ \tau z_3^{n+1/2} \end{bmatrix}, \begin{bmatrix} \tau\partial_{z_1}H^{n+1/2} \\ z^{n+1}_2 - z^n_2 \\ 0 \end{bmatrix} \Bigg\rangle 
	\le \big\langle \tau y^{n+1/2}, \tau u^{n+1/2} \big\rangle,
\end{align*}
which completes the proof. 
\end{proof}
The second discretization scheme is based on discrete gradients and does not require the Hamiltonian to be a quadratic function of the state.
Discrete gradients have been originally introduced in \cite{Gon96} for Hamiltonian systems and are also used for discretizing dissipative and pH systems, cf.~\cite{McLQR99,CelH17,KinTBK23,FroGLM24,Sch24}.
A continuous map $\overline{\nabla} H\colon \R^n\times\R^n\to\R^n$ with $n=n_1+n_2$ is called a discrete gradient of $H$ if it satisfies
\begin{equation}
	\label{eq:discretegradientproperties}
	\overline{\nabla} H(z,z)  = z\quad\text{and}\quad \big\langle \overline{\nabla} H(z,z'), z'- z \big\rangle = H(z')-H(z)\quad \text{for all }z,z'\in\R^n.
\end{equation}
Examples for the explicit construction of discrete gradients are presented, e.g., in \cite{McLQR99,Eid22}.
In the following, we use the notation $\overline{\partial_{z_1}}H$ and $\overline{\partial_{z_2}}H$ to denote the first $n_1$ and the last $n_2$ entries of $\overline{\nabla} H$, respectively, and we propose the time discrete system
\begin{subequations}
\label{eq:discreteGrad}
\begin{align}
	\begin{bmatrix} \tau \overline{\partial_{z_1}}H\Big(\begin{bsmallmatrix}z_1^n\\ z_2^n\end{bsmallmatrix},\begin{bsmallmatrix}z_1^{n+1}\\ z_2^{n+1}\end{bsmallmatrix}\Big) \\ z^{n+1}_2 - z^n_2 \\ 0 \end{bmatrix}
	&= \big( \Jmat - \Rmat \big) 
	\begin{bmatrix} 
		z^{n+1}_1 - z^n_1 \\ 
		\tau \overline{\partial_{z_2}}H\Big(\begin{bsmallmatrix}z_1^n \\ z_2^n \end{bsmallmatrix}, \begin{bsmallmatrix} z_1^{n+1}\\ z_2^{n+1} \end{bsmallmatrix}\Big) \\ 
	\tau z_3^{n+1/2} 
	\end{bmatrix}
	+ 
	\tau 
	\begin{bmatrix} 
		\Bmat_1 \\ 
		\Bmat_2 \\ 
		\Bmat_3 
	\end{bmatrix} 
	u^{n+1/2}, \\
	\tau y^{n+1/2}
	&= 
	\begin{bmatrix} 
		\Bmat_1^T & 
		\Bmat_2^T & 
		\Bmat_3^T 
	\end{bmatrix} 
	\begin{bmatrix} 
		z^{n+1}_1 - z^n_1 \\ 
		\tau \overline{\partial_{z_2}}H\Big(\begin{bsmallmatrix}z_1^n\\ z_2^n\end{bsmallmatrix},\begin{bsmallmatrix}z_1^{n+1}\\ z_2^{n+1}\end{bsmallmatrix}\Big) \\ 
		\tau z_3^{n+1/2} 
	\end{bmatrix}.
	\end{align}
\end{subequations}
Here, we use for simplicity the midpoint approximation $u^{n+1/2}$ as input. Note, however, that the following lemma and its proof can be easily extended to other consistent time discretizations of $u$.
\begin{lemma}[discrete energy dissipation, discrete gradient method]
\label{lem:discreteGrad}	
The discrete gradient scheme~\eqref{eq:discreteGrad} satisfies $H^{n+1}-H^n\le \tau\, \langle y^{n+1/2}, u^{n+1/2} \rangle$ and, in particular, $H^{n+1} \le H^n$ for vanishing inputs. 
\end{lemma}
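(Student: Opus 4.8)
The plan is to mirror the proof of \Cref{lem:midpoint} almost verbatim, replacing the quadratic identity $2H(z)=\langle z,\partial_z H\rangle$ (which was the mechanism producing $H^{n+1}-H^n=\langle z^{n+1}-z^n,\partial_z H^{n+1/2}\rangle$ in the midpoint case) by the defining property~\eqref{eq:discretegradientproperties} of the discrete gradient. Concretely, applying the second identity in~\eqref{eq:discretegradientproperties} with $z=[z_1^n;z_2^n]$ and $z'=[z_1^{n+1};z_2^{n+1}]$ and splitting $\overline{\nabla}H$ into its two blocks immediately yields
\[
	H^{n+1}-H^n
	= \big\langle \overline{\partial_{z_1}}H,\, z_1^{n+1}-z_1^n \big\rangle + \big\langle \overline{\partial_{z_2}}H,\, z_2^{n+1}-z_2^n \big\rangle,
\]
where the arguments of the discrete gradients are suppressed. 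This is the discrete-gradient analogue of the midpoint identity and is the only place where quadraticity entered before.

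Next I would rearrange this into the inner product of the two stacked vectors appearing in the scheme. Writing $e\coloneqq[z_1^{n+1}-z_1^n;\,\tau\overline{\partial_{z_2}}H;\,\tau z_3^{n+1/2}]$ for the right-hand vector and $f\coloneqq[\tau\overline{\partial_{z_1}}H;\,z_2^{n+1}-z_2^n;\,0]$ for the left-hand vector of~\eqref{eq:discreteGrad}, the identity above is exactly $\langle e,f\rangle=\tau\,(H^{n+1}-H^n)$, because the third block of $f$ vanishes and the $\tau$-factors combine correctly.

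Then I would substitute the first equation of~\eqref{eq:discreteGrad}, namely $f=(\Jmat-\Rmat)\,e+\tau\,[\Bmat_1;\Bmat_2;\Bmat_3]\,u^{n+1/2}$, into $\langle e,f\rangle$. Skew-symmetry of $\Jmat$ annihilates the term $\langle e,\Jmat e\rangle$, positive semi-definiteness of $\Rmat$ makes $-\langle e,\Rmat e\rangle\le 0$, and the output equation gives $[\Bmat_1^T,\Bmat_2^T,\Bmat_3^T]\,e=\tau\,y^{n+1/2}$, so that $\tau\,\langle e,[\Bmat_1;\Bmat_2;\Bmat_3]u^{n+1/2}\rangle=\tau^2\,\langle y^{n+1/2},u^{n+1/2}\rangle$. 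Collecting the three contributions yields $\tau\,(H^{n+1}-H^n)\le\tau^2\,\langle y^{n+1/2},u^{n+1/2}\rangle$, and dividing by $\tau>0$ finishes the proof.

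I do not expect a genuine obstacle here: the dissipation structure is inherited from $\Jmat$ and $\Rmat$ exactly as in the continuous \Cref{lem:dissipative} and in \Cref{lem:midpoint}, and the only new ingredient is the chain-rule surrogate provided by~\eqref{eq:discretegradientproperties}. The one point requiring a little care is the bookkeeping of the $\tau$-factors together with the vanishing third block, ensuring that the pairing $\langle e,f\rangle$ reproduces precisely $\tau\,(H^{n+1}-H^n)$; this is what makes the discrete gradient property applicable without any quadraticity assumption on $H$.
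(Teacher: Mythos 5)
Your proposal is correct and follows essentially the same route as the paper: both use the second defining property in~\eqref{eq:discretegradientproperties} as the chain-rule surrogate to get $\tau\,(H^{n+1}-H^n)=\langle e,f\rangle$ for the two stacked vectors of~\eqref{eq:discreteGrad}, and then conclude exactly as in \Cref{lem:midpoint} via skew-symmetry of $\Jmat$, semi-definiteness of $\Rmat$, and the output equation. Your bookkeeping of the $\tau$-factors and the vanishing third block matches the paper's computation precisely.
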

\begin{proof}
We use the second equation in \eqref{eq:discretegradientproperties} and proceed similarly as in the proof of \Cref{lem:midpoint} to obtain
\begin{align*}
	\tau\, (H^{n+1} - H^n) 
	&= \tau\, \Big\langle \overline{\nabla} H\Big(\begin{bsmallmatrix}z_1^n\\ z_2^n\end{bsmallmatrix},\begin{bsmallmatrix}z_1^{n+1}\\ z_2^{n+1}\end{bsmallmatrix}\Big), \begin{bsmallmatrix}z_1^{n+1}-z_1^n\\ z_2^{n+1}-z_2^n\end{bsmallmatrix} \Big\rangle\\
	&= \Bigg\langle \begin{bmatrix} z^{n+1}_1 - z^n_1 \\ \tau \overline{\partial_{z_2}}H\Big(\begin{bsmallmatrix}z_1^n\\ z_2^n\end{bsmallmatrix},\begin{bsmallmatrix}z_1^{n+1}\\ z_2^{n+1}\end{bsmallmatrix}\Big)\\ \tau z_3^{n+1/2} \end{bmatrix}, \begin{bmatrix} \tau \overline{\partial_{z_1}}H\Big(\begin{bsmallmatrix}z_1^n\\ z_2^n\end{bsmallmatrix},\begin{bsmallmatrix}z_1^{n+1}\\ z_2^{n+1}\end{bsmallmatrix}\Big) \\ z^{n+1}_2 - z^n_2\\ 0 \end{bmatrix} \Bigg\rangle\\
	&\le \big\langle \tau y^{n+1/2}, \tau u^{n+1/2} \big\rangle.
	\qedhere
\end{align*}
\end{proof}
After the introduction of the energy-based framework and the discussion of important properties, we now turn to specific examples. 
%
%
%=============================================================================
%=========  Examples 
%=============================================================================
\section{Collection of Examples}\label{sec:examples}
We start with examples which do not need the third variable~$z_3$. This includes classical input--state--output pH systems as well as the equations of poroelasticity. For semi-explicit DAEs of index~$2$, we will see that~$z_3$ allows the direct formulation without the need of an index reduction. Afterwards, we consider circuit models and constrained mechanical systems. Finally, we give three (constrained) PDE examples, which fit in the given framework if the matrices are replaced by operators with analogous properties. 
%
%
%=============================================================================
\subsection{Example I: pH systems and gradient systems}
% pH systems
Consider an input--state--output pH system (without constraints) as in~\cite[Sect.~4.2]{SchJ14}. 
Without feedthrough term, such a system has the form 
\begin{align*}
	\dot x 
	&= (\Jmat - \Rmat)\, \partial_x H(x) + \Bmat u,\\
	y 
	&= \Bmat^T \partial_x H(x).
\end{align*}
This directly translates into the form~\eqref{eq:model:inputoutput} with $z_1 = \bullet$ (no $z_1$ variable needed), $z_2 = x$, and $z_3 = \bullet$. 
% gradient systems
On the other hand, one may consider a gradient system as in~\cite{EggHS21}, i.e., 
\[
	\Cmat(u) \dot u 
	= - \partial_u H(u) %+ f(u)
\]
with $\Cmat$ being symmetric positive definite. In this case, we reach the form~\eqref{eq:model:inputoutput} with the state $z = [u; \bullet; \bullet]$, no in- or outputs, and $\Jmat=0$, $\Rmat=\Cmat$. 
If $\Cmat$ is skew-symmetric instead, then we have a Hamiltonian system and set $\Jmat=\Cmat$, $\Rmat=0$. 
%
%
%=============================================================================
\subsection{Example II: poroelasticity}
We now turn to a first DAE example for which the introduced setting is more suitable than the classical pH framework. A spatial discretization of the linear poroelasticity equations~\cite{Bio41,Sho00} yields the DAE 
\begin{align}
	\label{eq:poro:semiDiscrete}
	\begin{bmatrix} 0 & 0 \\ \Dmat & \Cmat \end{bmatrix}
	\begin{bmatrix} \dot u \\ \dot p \end{bmatrix}
	= 
	\begin{bmatrix} -\Amat & \phantom{-}\Dmat^T \\ 0 & -\Bmat \end{bmatrix}
	\begin{bmatrix} u \\ p \end{bmatrix}
	+ 
	\begin{bmatrix} \f \\ \g \end{bmatrix}
\end{align}
with initial conditions $u(0) = u^0$, $p(0) = p^0$. %In what follow, initial data is assumed to be consistent, i.e., $\Amat u^0 - \Dmat^T p^0 = \f^0$. 
Here, the unknowns $u$ and $p$ model the deformation of the material and the pressure of the fluid, respectively.  
% meaning of matrices
The stiffness matrix $\Amat\in \R^{n_u, n_u}$ describes the elasticity in the mechanical part of the problem. Assuming a suitable discretization, which also includes the (homogeneous) boundary conditions, $\Amat$ is symmetric positive definite. The same applies for $\Bmat\in \R^{n_p, n_p}$, which equals the stiffness matrix related to the fluid flow. Moreover, $\Cmat\in \R^{n_p, n_p}$ is a mass matrix which describes the compressibility of the fluid under pressure and is also assumed to be symmetric positive definite. Finally, $\Dmat\in \R^{n_p, n_u}$ is a rectangular matrix of full (row) rank. 
The potential energy, which is relevant from a physical point of view, reads
\begin{align}
	\label{eq:poro:energy}
	H(u,p)
	= \tfrac12\, \big\langle u,\Amat u\big\rangle + \tfrac12\, \big\langle p,\Cmat p\big\rangle.
%	= \tfrac12\, u^T \Amat u + \tfrac12\, p^T \Cmat p.	
\end{align}
%

% pH-formulation I
In~\cite{AltMU21c}, the pH structure of~\eqref{eq:poro:semiDiscrete} has been discussed. One possibility is to consider an extended system with the additional variable $w=\dot u$. With this, the system can be written in the form
\begin{equation*}
	\begin{bmatrix} 
		0 & 0 & 0\\ 0 & \Amat & 0 \\ 
		0 & 0 & \Cmat 
	\end{bmatrix}
	\begin{bmatrix} 
		\dot{w} \\ 
		\dot{u}\\ 
		\dot{p} 
	\end{bmatrix}
	= 
	\begin{bmatrix} 
		0 & -\Amat & \Dmat^T \\ 
		\Amat & 0 & 0 \\ 
		-\Dmat & 0 & -\Bmat 
	\end{bmatrix}
	\begin{bmatrix} 
		w \\ 
		u \\ 
		p 
	\end{bmatrix}
	+ 
	\begin{bmatrix} 
		f \\ 
		0 \\ 
		g 
	\end{bmatrix},
\end{equation*}
which has the typical pH structure $\Emat \dot z = (\Jmat-\Rmat) z + h$ with $z = [w;u;p]$, $\Emat = \Diag(0,\Amat,\Cmat)$, 
\[
	\Jmat 
	= 
	\begin{bmatrix} 
		0 & -\Amat & \Dmat^T \\ 
		\Amat & 0 & 0 \\ 
		-\Dmat & 0 & 0 
	\end{bmatrix}
	= -\Jmat^T, \qquad
	\Rmat
	= 
	\begin{bmatrix} 
		0 & 0 & 0 \\ 
		0 & 0 & 0 \\ 
		0 & 0 & \Bmat 
	\end{bmatrix} 
	= \Rmat^T
	\ge 0.
\]
The corresponding Hamiltonian $H = \frac12\, \langle z, \Emat z\rangle$ coincides with the energy~\eqref{eq:poro:energy} and satisfies (for vanishing right-hand sides) $\ddt H = \langle z, \Emat \dot z\rangle = -\langle z, \Rmat z\rangle = - \langle p, \Bmat p \rangle\le 0$. 

% pH-formulation II  
An alternative formulation, also presented in~\cite{AltMU21c}, can be obtained by the introduction of a more artificial variable~$q$ via $\Bmat q = \Dmat u + \Cmat p$. This then leads to a pH formulation with a different energy. We would like to emphasize that both formulations require an extension of the system equations in order to obtain the desired pH structure. This is not desirable, since the involved matrices are usually large as they result from a spatial discretization of the original PDE system. 

% new formulation 
The energy-based framework~\eqref{eq:model:inputoutput}, in turn, allows a direct formulation without any extensions. For this, we set $z = [u; \Cmat p; \bullet]$. Due to $\partial_{z_1} H = \Amat u$ and $\partial_{z_2} H = \Cmat^{-1}z_2 = p$, we can rewrite~\eqref{eq:poro:semiDiscrete} as
\[
	\begin{bmatrix} \Amat u\\ \Cmat\dot{p} \end{bmatrix}
	= \begin{bmatrix} 0 & \Dmat^T \\ - \Dmat & -\Bmat \end{bmatrix}
	\begin{bmatrix} \dot u \\ p \end{bmatrix}
	+ \begin{bmatrix} f \\ g \end{bmatrix},
\]
where the right-hand sides serve as input. The dissipation inequality from \Cref{lem:dissipative} then directly implies~$\ddt H = - \langle p, \Bmat p\rangle$ for the homogeneous case. 
\begin{remark}
If the permeability is allowed to depend on the porosity of the material, the diffusion term in the second equation of~\eqref{eq:poro:semiDiscrete} depends (in a nonlinear way) on the displacement~$u$, cf.~\cite{CaoCM13,AltM22}. 
Hence, the constant matrix $\Bmat$ from the linear model turns into $\Bmat(u)$. 
%the resulting nonlinear poroelastic model reads 
%%
%\begin{align}
	%	\label{eq:nonlinPoro}
	%	\begin{bmatrix} 0 & 0 \\ \Dmat & \Cmat \end{bmatrix}
	%	\begin{bmatrix} \dot u \\ \dot p \end{bmatrix}
	%	= 
	%	\begin{bmatrix} -\Amat & \phantom{-}\Dmat^T \\ 0 & -\Bmat(u) \end{bmatrix}
	%	\begin{bmatrix} u \\ p \end{bmatrix}
	%	+ 
	%	\begin{bmatrix} \f \\ \g \end{bmatrix}.
	%\end{align}
%
Assuming the same assumptions as in the linear case with $\Bmat(u)$ being symmetric and (uniformly) positive definite, we recover the structure from the linear case.  Hence, also nonlinear poroelasticity can be written in the form~\eqref{eq:model:inputoutput}, cf. \Cref{rem:statedependentJR}.  
\end{remark}
%
%
%=============================================================================
\subsection{Example III: a class of index-$1$ DAEs}
In the previous section, we have discovered that the equations of poroelasticity provide a DAE, where the pH formulation does not seem to be the natural choice of modeling. 
In fact, DAEs where the Hamiltonian equals a quadratic norm of the entire state (i.e., $H(z) = \frac12 z^T\Mmat z$ with $\Mmat=\Mmat^T>0$) cannot be modeled as a pH system without extending the system equations, since the energy would contain a singular matrix~$\Emat$ extracting parts of the state. A particular class of such DAEs (here without in- or outputs) has the form 
\begin{align*}
	\Dmat \dot z_1 + \dot z_2 
	&= (\Jmat_2-\Rmat_2)\, \Mmat_2 z_2, \\
	0
	&= \Mmat_1 z_1 - \Dmat^T \Mmat_2 z_2
\end{align*}
with $\Jmat_2=-\Jmat_2^T$, $\Rmat_2=\Rmat_2^T\ge0$ and $\Mmat_1=\Mmat_1^T>0$, $\Mmat_2=\Mmat_2^T>0$. 
Note that the first equation gives a differential equation for $z_2$, whereas the second equation provides an algebraic equation which can be solved for $z_1$. 
Hence, the system is of index~$1$; see~\cite[Sect.~3.3]{KunM06} for a definition of the index. We proceed with defining the state $z=[z_1;z_2;\bullet]$ and the quadratic energy 
\[
	H(z_1,z_2) 
	= \tfrac12\, \big\langle z_1, \Mmat_1 z_1\big\rangle + \tfrac12\, \big\langle z_2, \Mmat_2 z_2\big\rangle.
\] 
Due to $\partial_{z_1}H = \Mmat_1 z_1$ and $\partial_{z_2}H = \Mmat_2 z_2$, the above DAE can be written in the form~\eqref{eq:model:inputoutput}, namely 
\[
	\begin{bmatrix} 
		\Mmat_1 z_1 \\ 
		\dot{z}_2 
	\end{bmatrix}
	= 
	\bigg( 
	\begin{bmatrix} 
		0 & \Dmat^T \\ 
		-\Dmat & \Jmat_2 
	\end{bmatrix}
	-
	\begin{bmatrix} 
		0 & 0 \\ 
		0 & \Rmat_2 
	\end{bmatrix}
	\bigg)
	\begin{bmatrix} 
		\dot{z}_1 \\ 
		\Mmat_2 z_2 
	\end{bmatrix}.
\]
%
%
%=============================================================================
\subsection{Example IV: semi-explicit index-$2$ DAEs}
Consider a semi-explicit DAE  
\begin{subequations}
\label{eq:semiexplicitIndex2DAE}
\begin{align}
	\Mmat \dot u + \Amat u + \Bmat^T \lambda 
	&= f, \\
	\Bmat u \hspace{3.2em} 
	&= g
\end{align}
\end{subequations}
with symmetric positive definite matrices~$\Mmat$, $\Amat$ and a full-rank matrix~$\Bmat$. 
In this case, we have a system of index~$2$; see~\cite[Ch.~VII.1]{HaiW96}.
Considering as Hamiltonian $H = \frac12 \langle u, \Mmat u\rangle$, and setting $z_1=\bullet$, $z_2=\Mmat u$, and~$z_3=\lambda$, we get
\[
	\begin{bmatrix} 
		\Mmat \dot u \\ 
		0 
	\end{bmatrix}
	= 
	\begin{bmatrix} 
		-\Amat & -\Bmat^T \\
		\Bmat & 0 
	\end{bmatrix}
	\begin{bmatrix} 
		u \\ 
		\lambda 
	\end{bmatrix}
	+
	\begin{bmatrix}
		f\\
		-g
	\end{bmatrix}
	.
\]
Again, the right-hand sides serve as input and the system may be completed by an appropriate output equation.

% index reduction
For numerical reasons, one may be interested in related systems of lower index. We present two approaches and start with the introduction of a singular perturbation in the second equation. 
More precisely, we may replace the original DAE by the system 
\begin{align*}
	\Mmat \dot u + \Amat u + \Bmat^T \lambda &= f, \\
	\eps^2 \dot \lambda + \Bmat u \hspace{3.2em} &= g
\end{align*}
with a small parameter $1 \gg\eps>0$. 
Note that this system equals an ODE. 
To bring this into the form~\eqref{eq:model:inputoutput}, we introduce $z = [u; \eps \lambda;\bullet]$ as state and the Hamiltonian~$H = \frac12 \langle u, \Amat u\rangle + \langle u, \Bmat^T\lambda\rangle$. 
This leads to $\partial_{z_1} H = \Amat u + \Bmat^T\lambda$, $\partial_{z_2} H = \frac1\eps \Bmat u$, and the system equations 
\[
	\begin{bmatrix}
		\Amat u + \Bmat^T\lambda \\ 
		\eps \dot\lambda 
	\end{bmatrix}
	%	= \begin{bmatrix} \partial_{z_1} H \\ \dot{z}_2 \end{bmatrix}
	= 
	\begin{bmatrix} 
		-\Mmat & 0 \\ 
		0 & -\Imat 
	\end{bmatrix}
	\begin{bmatrix} 
		\dot{u} \\ 
		\eps^{-1} \Bmat u 
	\end{bmatrix}
	+ 
	\begin{bmatrix} 
		f \\ 
		\eps^{-1} g 
	\end{bmatrix}
\]
with $\Imat$ denoting the identity matrix. 
% index reduction B
An alternative way to reduce the index is to replace the constraint by its derivative~$\Bmat \dot{u} = \dot{g}$. 
In this case, one may consider the state $z=[u;\bullet;\lambda]$ together with the energy functional $H = \frac12 \langle u, \Amat u\rangle$. 
This then gives 
\[
	\begin{bmatrix} 
		\Amat u \\ 
		0 
	\end{bmatrix}
	= 
	\begin{bmatrix} 
		-\Mmat & -\Bmat^T \\ 
		\Bmat & 0 
	\end{bmatrix}
	\begin{bmatrix} 
		\dot{u} \\ 
		\lambda 
	\end{bmatrix}
	+ 
	\begin{bmatrix} 
		f \\ 
		-\dot g 
	\end{bmatrix}. 
\]
We would like to emphasize that the index-reduced models are based on Hamiltonians which may not reflect the physical energy of the system.
%
%
%=============================================================================
\subsection{Example V: DC power network}
Another example, for which the extension of the state by a third component is necessary, is the linear electrical circuit from~\cite[Sect.~4.1]{MehM19}. 
For given parameters $R_G, R_L, R_R, L, C_1, C_2 > 0$ and a voltage source $E_G$, the model equations read
\begin{subequations}
\label{eq:DCexample}
\begin{align}
	L\dot{I} 
	&= -R_L I + V_2 - V_1, \\
	C_1\dot{V}_1 
	&= I - I_G, \\
	C_2\dot{V}_2 
	&= - I - I_R, \\
	0 
	&= - R_G I_G + V_1 + E_G, \\
	0 
	&= - R_R I_R + V_2 
\end{align}
\end{subequations}
with unknowns $I$, $V_1$, $V_2$, $I_G$, $I_R$. 
The corresponding energy has the form 
\[
	H(I, V_1, V_2)
	= \tfrac12\, L I^2 + \tfrac12\, C_1 V_1^2 + \tfrac12\, C_2 V_2^2.
\]
In~\cite{MehM19}, it is shown that this system can be written as a pH-DAE. More precisely, summarizing the unknowns in a vector~$x$, there exist $\Jmat=-\Jmat^T$ and $\Rmat = \Rmat^T \ge 0$ such that~\eqref{eq:DCexample} is equivalent to 
\[
	\Emat \dot{x} 
	= (\Jmat - \Rmat)x + \Bmat u
	= (\Jmat - \Rmat)x + e_4 E_G
\]  
with $\Emat = \Diag(L, C_1, C_2, 0, 0)$ and output $y = \Bmat^T x$.
% new framework
Defining the states~$z_1 = \bullet$, $z_2 = [LI; C_1V_1; C_2V_2]$, and $z_3=[I_G; I_R]$, we have 
\[
	\begin{bmatrix} 
		\dot{z}_2 \\ 
		0 
	\end{bmatrix}
	= 
	\Emat \dot{x}, \qquad
	\begin{bmatrix} 
		\partial_{z_2}H \\ 
		z_3 
	\end{bmatrix}
	= x.
\]
Hence, we directly obtain the structure~\eqref{eq:model:inputoutput} with the same matrices~$\Jmat$ and $\Rmat$. 
%
%
%=============================================================================
\subsection{Example VI: nonlinear circuit model}
In \cite[eq.~(13)]{GerHRS21}, the authors present a general nonlinear circuit model of the form  
\begin{align*}
	A_{\mathrm{C}}\dot{q}_{\mathrm{C}}+A_{\mathrm{R}}G \big(A_{\mathrm{R}}^T\phi\big)+A_{\mathrm{L}}i_{\mathrm{L}}+A_{\mathrm{S}}i_{\mathrm{S}} 
	&= 0,\\
	-A_{\mathrm{L}}^T\phi+\dot{\psi}_{\mathrm{L}} 
	&=0,\\
	-A_{\mathrm{S}}^T\phi+u_{\mathrm{S}} 
	&=0,\\
	A_{\mathrm{C}}^T\phi-\nabla H_{\mathrm{C}}(q_{\mathrm{C}}) 
	&=0,\\
	i_{\mathrm{L}}-\nabla H_{\mathrm{L}}(\psi) 
	&=0
\end{align*}
with matrices $A_{\mathrm{C}}$, $A_{\mathrm{R}}$, $A_{\mathrm{L}}$, $A_{\mathrm{S}}$ representing edges connected to capacitances, resistances, inductances, and sources, respectively. 
Moreover, $q_{\mathrm{C}}$ contains the charges at the capacitances, $G$ is a function describing the current-voltage relation at the resistances satisfying $\langle \eta, G(\eta)\rangle\ge 0$ for all vectors $\eta$ of appropriate dimension, $\phi$ contains the vertex potentials, $i_{\mathrm{L}}$ and $i_{\mathrm{S}}$ are the currents at the inductances and sources, respectively, $\psi_{\mathrm{L}}$ the magnetic fluxes at the inductances, $u_{\mathrm{S}}$ the voltages at the sources, and $H_{\mathrm{C}}$ and $H_{\mathrm{L}}$ denote the energy stored in the capacitances and inductances, respectively.

In the following, we eliminate the last equation and the variable $i_{\mathrm{L}}$. 
We further consider the special case where $G$ is linear and can be represented by a positive semi-definite matrix. 
As Hamiltonian, we consider the total energy $H(q_{\mathrm{C}},\psi) = H_{\mathrm{C}}(q_{\mathrm{C}})+H_{\mathrm{L}}(\psi)$.
Then, introducing $z_1=q_{\mathrm{C}}$, $z_2=\psi_{\mathrm{L}}$, and $z_3=[i_{\mathrm{S}}; \phi]$, we may write the system in the structured form 
\begin{equation*}
	\begin{bmatrix}
		\nabla H_{\mathrm{C}} \\ 
		\dot{\psi}_{\mathrm{L}} \\ 
		0 \\ 
		0
	\end{bmatrix}
%	=
%	\begin{bmatrix}
%		\partial_{z_1} H \\ \dot{z}_2 \\ 0
%	\end{bmatrix}
	=
	(\Jmat-\Rmat)
	\begin{bmatrix}
		\dot{z}_1\\
		\partial_{z_2} H\\
		z_3
	\end{bmatrix}
	+ 
	\Bmat u_{\mathrm{S}}
	=
	\begin{bmatrix}
		0 & 0 & 0 & A_{\mathrm{C}}^T\\
		0 & 0 & 0 & A_{\mathrm{L}}^T\\
		0 & 0 & 0 & A_{\mathrm{S}}^T\\
		-A_{\mathrm{C}} & -A_{\mathrm{L}} & -A_{\mathrm{S}} & -A_{\mathrm{R}}GA_{\mathrm{R}}^T
	\end{bmatrix}
	\begin{bmatrix}
		\dot{q}_{\mathrm{C}}\\
		\nabla H_{\mathrm{L}}\\
		i_{\mathrm{S}}\\
		\phi
	\end{bmatrix}
	-
	\begin{bmatrix}
		0 \\
		0 \\
		u_{\mathrm{S}} \\
		0
	\end{bmatrix}.
\end{equation*}
%
%
%=============================================================================
%\subsection{Example ??: mass--spring--damper model}
%Consider a spring of mass~$m>0$ with damping constant~$d\ge0$ and spring constant~$k>0$. The resulting mechanical system is usually written as the second-order ODE 
%%
%\[
%m\ddot x + d\dot x + kx = 0
%\]
%%
%with (kinetic $+$ potential) energy $H(x) = \frac m2 \dot x^T \dot x + \frac k2 x^T x$. Considering the first-order formulation with $y=\dot x$ and introducing $z = [x; y; x]$, we can write the system as 
%%
%\begin{align*}
%	\begin{bmatrix}
%		k x \\ \dot{y} \\ 0
%	\end{bmatrix}
%	=
%	\begin{bmatrix}
%		\partial_{z_1} H \\ \dot{z}_2 \\ 0
%	\end{bmatrix}
%	=
%	(\Jmat-\Rmat)
%	\begin{bmatrix}
%		\dot{z}_1\\
%		\partial_{z_2} H\\
%		z_3
%	\end{bmatrix}
%	=
%	\begin{bmatrix}
%		0 & 0 & k \\ 0 & -\frac{d}{m^2} & - \frac km \\ -k & \frac km & 0 
%	\end{bmatrix}
%	\begin{bmatrix}
%		\dot{x} \\ m y \\ x
%	\end{bmatrix}.
%\end{align*}
%%
%
%
%=============================================================================
\subsection{Example VII: constrained mechanical systems}
Mechanical systems are typically modeled by differential equations of second order, often in combination with constraints. 
If we consider nonholonomic constraints on velocity level, the system equations read 
\begin{align*}
	\Mmat\ddot{x} + \Dmat\dot{x} + \Kmat x + \Bmat^T\lambda 
	&= f, \\
	\Bmat \dot x \hspace{5.95em}
	&= g, 
\end{align*}
which equals a DAE of index~$2$. 
Therein, the mass and stiffness matrices $\Mmat$ and $\Kmat$ are assumed to be symmetric positive definite, whereas the damping matrix $\Dmat$ is symmetric positive semi-definite. 
For the corresponding first-order formulation, we introduce the variable $y\coloneqq\dot{x}$. 
With this, the combination of kinetic and potential energy reads 
\[
	H 
	= \tfrac 12\, \big\langle y,\Mmat y\big\rangle + \tfrac 12\, \big\langle x, \Kmat x\big\rangle.
\]
Introducing $z_1 = \bullet$, $z_2 = [x; \Mmat y]$, and $z_3 = \lambda$ as states, we get $\partial_{z_2} H = [\Kmat x; y]$, leading to 
\begin{align*}
	\begin{bmatrix}
		\dot{x} \\ 
		\Mmat \dot{y} \\ 
		0 
	\end{bmatrix}
	=
	\begin{bmatrix}
		\dot{z}_2 \\ 
		0 
	\end{bmatrix}
	=
	(\Jmat-\Rmat)
	\begin{bmatrix} 
		\partial_{z_2} H \\ 
		z_3
	\end{bmatrix}
	+
	\begin{bmatrix}
		0 \\ 
		f \\ 
		-g 
	\end{bmatrix}
	=
	\begin{bmatrix}
		0 & \Imat & 0 \\ 
		-\Imat & -\Dmat & -\Bmat^T \\ 
		0 & \Bmat & 0 
	\end{bmatrix}
	\begin{bmatrix}
		\Kmat x \\ 
		y \\ 
		\lambda \\ 
	\end{bmatrix}
	+
	\begin{bmatrix}
		0 \\ 
		f \\ 
		-g \\ 
	\end{bmatrix}
\end{align*}
with $\Imat$ denoting again the identity matrix. 

If we consider a mechanical system with holonomic constraints, i.e., with constraints on $x$ rather than $\dot x$, we obtain a DAE of index~$3$; see \cite[Ch.~VII.1]{HaiW96}. 
In this case, one often performs an index reduction to overcome numerical instabilities.
The corresponding GGL-formulation~\cite{GeaGL85} includes the derivative of the constraint and an additional Lagrange multiplier. 
One variant reads 
\begin{align*}
	% original form:
	%\Mmat \dot x - \Mmat y \hspace{2.45em} + \Bmat^T\mu  
	\Kmat \dot x - \Kmat y \hspace{2.62em} + \Bmat^T\mu  
	&= 0, \\
	\Mmat\dot y + \Dmat y + \Kmat x + \Bmat^T\lambda 
	&= f, \\
	\Bmat x \hspace{3.25em}
	&= g, \\
	\Bmat y \hspace{5.9em}
	&= \dot g.
\end{align*}
To bring this into the form~\eqref{eq:model:inputoutput}, we set $z_1 = \bullet$, $z_2 = [\Kmat x; \Mmat y]$, and $z_3 = [\lambda; \mu]$. 
Considering the same energy as before, this implies $\partial_{z_2} H = [x; y]$ such that the system can be written as 
\begin{align*}
	\begin{bmatrix}
		\Kmat \dot{x} \\ 
		\Mmat \dot{y} \\ 
		0 \\ 
		0 
	\end{bmatrix}
	=
	\begin{bmatrix}
		\dot{z}_2 \\ 
		0 
	\end{bmatrix}
	=
	(\Jmat-\Rmat)
	\begin{bmatrix} 
		\partial_{z_2} H\\
		z_3
	\end{bmatrix}
	=
	\begin{bmatrix}
		0 & \Kmat & 0 & -\Bmat^T \\ 
		-\Kmat & -\Dmat & -\Bmat^T & 0 \\
		0 & \Bmat & 0 & 0 \\
		\Bmat & 0 & 0 & 0 
	\end{bmatrix}
	\begin{bmatrix}
		x \\ 
		y \\ 
		\lambda \\ 
		\mu
	\end{bmatrix}
	+
	\begin{bmatrix}
		0 \\ 
		f \\ 
		-\dot{g} \\ 
		-g
	\end{bmatrix}.
\end{align*}

An alternative approach, which comes along without an index reduction, is to add a vanishing term to the energy. More precisely, for a constant~$g$, we define
\[
	H 
	= \tfrac 12\, \big\langle y,\Mmat y\big\rangle + \tfrac 12\, \big\langle x, \Kmat x\big\rangle + \big\langle \lambda, \Bmat x - g\big\rangle, 
\]
which equals the original energy on the solution manifold. 
For this, the choice $z_1 = [x; y; \lambda]$, $z_2 = z_3 = \bullet$ leads to 
\begin{align*}
	\begin{bmatrix}
		\Kmat x + \Bmat^T \lambda \\ 
		\Mmat y \\ 
		\Bmat x - g\\  
	\end{bmatrix}
	=
	\partial_{z_1} H 
	=
	(\Jmat-\Rmat)\, \dot{z}_1 
	=
	\begin{bmatrix}
		-\Dmat & -\Mmat & 0 \\ 
		\Mmat & 0 & 0 \\
		0 & 0 & 0 
	\end{bmatrix}
	\begin{bmatrix}
		\dot{x} \\ 
		\dot{y} \\ 
		\dot{\lambda}
	\end{bmatrix}
	+
	\begin{bmatrix}
		f \\ 
		0 \\  
		0
	\end{bmatrix}.
\end{align*}
The corresponding output then only contains $\dot x$. 
\subsection{Example VIII: vibrating string}
\label{sec:examples:string}
We now turn to PDE examples, starting with the vibrating string in one space dimension~\cite[Sect.~1.1]{JacZ12}.  
Let $\rho$ denote the constant mass density and $T$ the Young modulus of the material.  
Then we seek the (vertical) displacement~$w$ satisfying
\[
	\rho\, \ddot w(x,t) 
	= \partial_x \big( T\partial_x w(x,t) \big)
\]
for all~$x\in(0,1)$ and~$t\in[0,T]$ together with homogeneous Dirichlet boundary conditions. 
The corresponding energy reads 
\[
	H
	= \tfrac12\, \big\langle \dot w, \rho \dot w\big\rangle + \tfrac12\, \big\langle \partial_x w, T \partial_x w \big\rangle
	\coloneqq \frac12 \int_0^1 \rho\, \dot w^2 \dx + \frac12 \int_0^1 T\, (\partial_x w)^2 \dx. 
\]
Note that, leaving the finite-dimensional setting, the brackets $\langle\,\cdot\,,\cdot\,\rangle$ now denote the inner product in $L^2(0,1)$. 
As for the pH formulation (see, e.g., \cite{MehZ24}), one can set $z_2 = [\rho\dot{w}; \partial_x w]$ and $z_1 = z_3 = \bullet$. 
This then yields 
\[
	\begin{bmatrix}
		\rho \ddot {w} \\ 
		\partial_x \dot w
	\end{bmatrix}
	=
	\dot{z}_2
	= 
	(\Jmat-\Rmat)\, 
	\partial_{z_2} H
	=
	\begin{bmatrix} 
		0 & \partial_x \\
		\partial_x & 0 
	\end{bmatrix}
	\begin{bmatrix}
		\dot{w} \\ T \partial_x w 
	\end{bmatrix}.
\]
Hence, we have $\Rmat = 0$ and $\Jmat$ is skew-symmetric, since $\langle \partial_x u, v\rangle = - \langle u, \partial_x v\rangle$ if at least one of the functions $u, v$ has vanishing boundary data. 
%
%alternativ: $z=[u;\partial_x w;u]$ und  
%%
%\[
%	\begin{bmatrix}
%		\rho u \\ \partial_x \dot w\\ 0
%	\end{bmatrix}
%	=
%	\begin{bmatrix}
%		0 & 0 & \rho \\ 
%		0 & 0 & \partial_x \\
%		-\rho & \partial_x & 0 
%	\end{bmatrix}
%	\begin{bmatrix}
%		\dot u \\ T \partial_x w \\ u
%	\end{bmatrix}.
%\]
%
%
%=============================================================================
\subsection{Example IX: viscoelastic Stokes problem}
Following~\cite{Ren89}, a model for the fluid flow in a structure such as molten polymers is given by 
\begin{align*}
	\rho \dot{v} - \nabla\cdot T + \nabla p &= f, \\
	\eps \dot{T} - \tfrac{\eta}{2}\, \big( \nabla v + (\nabla v)^T \big) + T &= g, \\
	\nabla \cdot v &= 0.
\end{align*}
Therein, the unknowns are the velocity field~$v$ (assumed here to have homogeneous Dirichlet boundary conditions on the boundary of the computational domain~$\Omega$, i.e., $v = 0$ on $\partial\Omega$), the symmetric Cauchy stress tensor $T$, and the pressure~$p$. 
Moreover, we have the density~$\rho$, the zero-shear-rate viscosity $\eta$, and the relaxation time $\eps$ as given parameters of the system. 
As energy function, we consider 
\[
	H 
	= \int_\Omega \frac{\eta\rho}{2}\, |v(x,t)|^2 + \frac{\eps}{2}\, |T(x,t)|^2 \dx.
\]
We show that also this example fits in the framework of~\eqref{eq:model:inputoutput} if the appearing matrices are replaced by (differential) operators. 
Considering $z_1 = \bullet$, $z_2 = [\rho v; \eps T]$, and $z_3 = p$, we get $\partial_{z_2}H = [\eta v; T]$. 
With this, we can write the (homogeneous) viscoelastic Stokes problem in the form  
\begin{align*}
	\begin{bmatrix}
		\rho \dot{v} \\ 
		\eps \dot{T} \\ 
		0
	\end{bmatrix}
	=
	\begin{bmatrix}
		\dot{z}_2 \\ 
		0
	\end{bmatrix}
	=
	(\Jmat-\Rmat)
	\begin{bmatrix} 
		\partial_{z_2} H\\
		z_3
	\end{bmatrix}
	=
	\begin{bmatrix}
		0 & \nabla\cdot & - \nabla \\ 
		\nabla^\text{s} & -\id & 0 \\ 
		-\nabla \cdot & 0 & 0
	\end{bmatrix}
	\begin{bmatrix}
		\eta v\\
		T\\
		p 
	\end{bmatrix},
\end{align*}
% alternativ:
%\begin{align*}
%	\begin{bmatrix}
%		\eta\rho v \\ \dot{T} \\ 0 \\ 0
%	\end{bmatrix}
%	=
%	\begin{bmatrix}
%		\partial_{z_1} H \\ \dot{z}_2 \\ 0
%	\end{bmatrix}
%	=
%	(\Jmat-\Rmat)
%	\begin{bmatrix}
%		\dot{z}_1\\
%		\partial_{z_2} H\\
%		z_3
%	\end{bmatrix}
%	=
%	\begin{bmatrix}
%		0 & 0 & 0 & \eta\rho \\ 
%		0 & -\frac{1}{\eps^2} & 0 & \frac{\eta}{\eps} (\nabla^\text{s}) \\ 
%		0 & 0 & 0 & -\eta\nabla \cdot \\ 
%		-\eta\rho & \frac{\eta}{\eps} \nabla\cdot & - \eta\nabla & 0
%	\end{bmatrix}
%	\begin{bmatrix}
%		\dot{v}\\
%		\eps T\\
%		p \\
%		v
%	\end{bmatrix},
%\end{align*}
%
where~$\nabla^\text{s}$ equals the symmetric gradient. 
Hence, the dissipative part is given by~$\Rmat=\Diag(0,\id,0)$. 
To see that the remaining part is indeed skew-symmetric, we use the homogeneous boundary conditions of $v$ and the symmetry of $T$, leading to 
\[
	2\, \langle \nabla^\text{s} v, T\rangle
	= \langle \nabla v, T\rangle + \big\langle (\nabla v)^T, T\big\rangle
	= -\langle v, \nabla\cdot T\rangle - \big\langle v, \nabla \cdot(T^T)\big\rangle
	= -2\,\langle v, \nabla\cdot T\rangle.
\]
%
%
%=============================================================================
\subsection{Example X: Cahn--Hilliard equation}
In this final example, we consider the Cahn--Hilliard equation in the weak form. 
In a bounded space--time domain~$\Omega \times [0,T]$ with $\Omega \subset \R^d$, $d\in\{2, 3\}$, and $T<\infty$ the Cahn--Hilliard equations~\cite{CahH58,EllS86} are given by 
\begin{subequations}
	\label{eq:CahnHilliard_bulk}
	\begin{alignat}{3}
		\dot u - \sigma\Delta w 
		&= 0  &&\qquad\text{in } \Omega \times [0,T],\label{eq:CahnHilliard_bulk_a} \\ 
		- \eps\, \Delta u + \eps^{-1} W^\prime(u) 
		&= w &&\qquad \text{in } \Omega \times [0,T]. \label{eq:CahnHilliard_bulk_b}
	\end{alignat}
\end{subequations}
%
% initial condition~$u(0) = u^0$. 
Therein, $\eps$ denotes the so-called interaction length and $W$ the energy potential. 
Modeling a material, which does not interact with the surrounding boundary, we consider homogeneous Neumann boundary conditions for $u$ and $w$.
With the outer normal unit vector~$n$, these conditions read~ $\partial_n u = 0$ and $\partial_n w = 0$ on $\partial\Omega \times [0,T]$. 

% weak form 
For the weak formulation, we introduce $\calV \coloneqq H^1(\Omega)$ and the differential operator
\[
	\calK\colon \calV\to\calV^*, \qquad
	\langle \calK u, v \rangle 
	\coloneqq \int_\Omega \nabla u \cdot \nabla v \dx.
\]
Then, system~\eqref{eq:CahnHilliard_bulk} together with the mentioned boundary conditions is equivalent to 
\begin{alignat*}{3}
	\dot u + \sigma \calK w 
	&= 0, \\ % &&\qquad \text{in } \calV^*,\\
	\eps\, \calK u + \eps^{-1} W^\prime(u) 
	&= w. % &&\qquad \text{in } \calV^*.
\end{alignat*}
The corresponding energy reads 
\[ 
	H_{\text{bulk}}(u)
	= \int_\Omega \frac \eps 2\, \nabla u \cdot \nabla u + \frac 1 \eps\, W(u) \dx
	= \frac \eps 2\, \langle \calK u, u \rangle + \frac 1 \eps\, \int_\Omega W(u) \dx.
\]
Considering the state $z=[u;\bullet;w]$, we get the energy-based formulation without in- or outputs, namely
\begin{align*}
	\begin{bmatrix}
		\eps \calK u + \eps^{-1} W'(u) \\ 0
	\end{bmatrix}
	=
	\begin{bmatrix}
		\partial_{z_1} H \\ 0
	\end{bmatrix}
	=
	(\Jmat-\Rmat)
	\begin{bmatrix}
		\dot{z}_1\\	z_3
	\end{bmatrix}
	=
	\Bigg(
	\begin{bmatrix}
		0 & \id \\ -\id & 0
	\end{bmatrix}
	-
	\begin{bmatrix}
		0 & 0 \\ 0 & \sigma\calK
	\end{bmatrix}
	\Bigg)
	\begin{bmatrix}
		\dot{u}\\ w
	\end{bmatrix}.
\end{align*}
\begin{remark}
One may also consider dynamic boundary conditions which additionally reflect the surface energy; see, e.g., \cite{KenEMRSBD01}. 
With $\Gamma \coloneqq \partial\Omega$, the overall energy is then given as 
\[
	H(u)
%	= H_{\text{bulk}}(u) + H_{\text{surf}}(u)
	= H_{\text{bulk}}(u) + \int_\Gamma \frac \delta 2\, \nabla_\Gamma u \cdot \nabla_\Gamma u + \frac 1 \delta\, W_\Gamma(u) \dx 
%	= \frac \delta 2\, \langle \calK_\Gamma u, u \rangle + \int_\Gamma \frac 1 \delta\, W_\Gamma(u) \dx, \qquad	
\]
with the interaction length on the boundary~$\delta$ and the boundary energy potential~$W_\Gamma$. 
For the particular model introduced in~\cite{LiuW19}, system~\eqref{eq:CahnHilliard_bulk} is closed by 
\begin{alignat*}{3}
	\partial_n w 
	&= 0  &&\qquad\text{on } \Gamma \times [0,T],\\
	\dot u - \Delta_\Gamma w_\Gamma 
	&= 0  &&\qquad\text{on } \Gamma \times [0,T],\\
	- \delta\, \Delta_\Gamma u + \delta^{-1} W_\Gamma^\prime(u) + \eps \partial_n u
	&= w_\Gamma &&\qquad \text{on } \Gamma \times [0,T].  
\end{alignat*}
Hence, one still assumes that the chemical potential~$w$ does not interact with the solid wall, whereas a new boundary chemical potential~$w_\Gamma$ is introduced. 
A possible weak formulation of the system is presented in~\cite{AltZ23}. 
After an additional index reduction, the resulting system can again be written in the form~\eqref{eq:model:inputoutput}.
\end{remark}
%
%
%=============================================================================
%=========  Conclusions
%=============================================================================
\section{Conclusions}
A novel energy-based model framework is introduced which covers a large number of examples from different fields of application. 
Systems which can be modeled in such a way are energy dissipative and can be interconnected in a structure preserving way. 
Moreover, a discretization by the midpoint rule or a discrete gradient method preserves the dissipativity property for the discrete energy. 
%
%
%=============================================================================
%=========  Acknowledgments
%=============================================================================
\section*{Acknowledgments} 
The authors would like to thank Volker Mehrmann and Riccardo Morandin for helpful discussions on the paper. 
Moreover, RA acknowledges support by the Deutsche Forschungsgemeinschaft (DFG, German Research Foundation) - 467107679.  
%
%
%=============================================================================
%=========  Bibs
%=============================================================================
\bibliographystyle{alpha}
\bibliography{references}
%
%
%=============================================================================
%=========  Appendix
%=============================================================================
%\appendix
%\section{Proof of a Norm Estimate}\label{app:proofs}
%
%
\end{document}